\pgfplotsset{compat=newest}
\newtheorem{thm}{Theorem}[section]
\newtheorem{lem}[thm]{Lemma}
\newtheorem{cor}[thm]{Corollary}
\newtheorem{prop}[thm]{Proposition}
\newtheorem{conj}[thm]{Conjecture}
\newtheorem{prob}[thm]{Problem}
\theoremstyle{definition}			                						% Subseq. Thm Style
\newtheorem{rem}[thm]{Remark}
\newtheorem{ex}[thm]{Example}
\numberwithin{equation}{section}		            					% Number eqns within section
\newcommand{\bb}[1]{\mathbb{#1}}								% Blackboard boldface for R, C, N, Q, etc.
\newcommand{\ii}{\textup{i}}									% \textup{i} := \sqrt{-1}
\newcommand{\mat}[2]{{M}_{#1}(#2)}							% nxn real matrices
\newcommand{\inv}[1]{#1^{-1}}								% inverse
\newcommand\floor[1]{\lfloor#1\rfloor}								% floor function
\newcommand{\jord}[2]{J_{#1} \left( #2\right)}
\newcommand{\sig}[1]{\sigma \left( #1 \right)}						% multi-set of eigenvalues
\newcommand{\dg}[1]{\Gamma \left( #1 \right)}						% directed graph (gamma}
\newcommand{\hyp}[2]{#1 \hyperref[#2]{\ref*{#2}}}					% Hyperlink
\newcommand{\karp}{Karpelevi{\v{c}}}
\journal{Linear Algebra and its Applications}
\begin{document}
% Front matter--------------------------------------------------------------
\begin{frontmatter}
\title{A matricial view of the \karp~Theorem}

\author[addy1]{Charles R.~Johnson}										
\ead{crjohn@wm.edu}

\author[addy2]{Pietro Paparella\corref{corpp}}
\ead{pietrop@uw.edu}
\ead[url]{http://faculty.washington.edu/pietrop/}

\cortext[corpp]{Corresponding author.}

\address[addy1]{Department of Mathematics, College of William \& Mary, Williamsburg, VA 23187-8795, USA}
\address[addy2]{Division of Engineering and Mathematics, University of Washington Bothell, Bothell, WA 98011-8246, USA}

\begin{abstract}
The question of the exact region in the complex plane of the possible single eigenvalues of all $n$-by-$n$ stochastic matrices was raised by Kolmogorov in 1937 and settled by \karp~in 1951 after a partial result by Dmitriev and Dynkin in 1946. The \karp~result is unwieldy, but a simplification was given by {\DJ}okovi{\'c} in 1990 and Ito in 1997. The \karp~region is determined by a set of boundary arcs each connecting consecutive roots of unity of order less than $n$. It is shown here that each of these arcs is realized by a single, somewhat simple, parameterized stochastic matrix. Other observations are made about the nature of the arcs and several further questions are raised. The doubly stochastic analog of the \karp~region remains open, but a conjecture about it is amplified.
\end{abstract}

\begin{keyword}
Stochastic matrix \sep Doubly stochastic matrix \sep \karp~arc \sep \karp~region \sep Ito polynomial \sep Realizing matrix

\MSC[2010] 15A18 \sep 15A29 \sep 15B51
\end{keyword}
\end{frontmatter}

%-----------------------------------------------------------------------------------------------------------------------------------------------------------------------------------------------------------------------------
\section{Introduction}
%-----------------------------------------------------------------------------------------------------------------------------------------------------------------------------------------------------------------------------

In \cite{k1937}, Kolmogorov posed the problem of characterizing the subset of the complex plane, denoted by $\Theta_n$, that consists of the individual eigenvalues of all $n$-by-$n$ stochastic matrices. 

One can easily verify that for each $n \geq 2$, the region $\Theta_n$ is closed, inscribed in the unit-disc, star-convex (with star-centers at zero and one), and symmetric with respect to the real-axis. Furthermore, it is clear that $\Theta_n \subseteq \Theta_{n+1}$, $\forall n \in \bb{N}$. In view of these properties, $\partial \Theta_n = \{ \lambda \in \Theta_n : \alpha \lambda \not \in \Theta_n,\forall \alpha > 1\}$, and each region is determined by its boundary.   

Dmitriev and Dynkin \cite{dd1946} obtained a partial solution to Kolmogorov's problem, and \karp~\cite[Theorem B]{k1951}, expanding on the work of \cite{dd1946}, resolved it by showing that the boundary of $\Theta_n$ consists of curvilinear arcs (herein, \emph{\karp~arcs} or K-arcs), whose points satisfy a polynomial equation that is determined by the endpoints of the arc (which are consecutive roots of unity). {\DJ}okovi{\'c} \cite[Theorem 4.5]{d1990} and Ito \cite[Theorem 2]{i1997} each provide a simplification of this result. However, noticably absent in the \karp~Theorem (and the above-mentioned works) are \emph{realizing-matrices} (i.e., a matrix whose spectrum contains a given point) for points on these arcs. 

This problem has been addressed previously in the literature. Dmitriev and Dynkin \cite[Basic Theorem]{dd1946} give a schematic description of such matrices for points on the boundary of $\Theta_n \backslash \Theta_{n-1}$ and Swift \cite[\S 2.2.2]{s1972} provides such matrices for $3\leq n \leq 5$.

Our main result is providing, for every $n$ and for each arc, a single parametric matrix that realizes the entire $K$-arc as the parameter runs from 0 to 1. Aside from the theoretical importance -- after all, the original problem posed by Kolmogorov is intrinsically matricial -- possession of such matrices is instrumental in the study of nonreal \emph{Perron similarities} in the longstanding \emph{nonnegative inverse eigenvalue problem} \cite{jp2017}, and provides a framework for resolving Conjecture 1 \cite{lpk2015} vis-\`{a}-vis the results in \cite{j1981}. 

In addition, we provide some partial results on the differentiability of the Karpelevi{\v{c} arcs. We demonstrate that some powers of certain realizing-matrices realize other arcs. Finally, we pose several problems that appeal to a wide variety of mathematical interests.

%-----------------------------------------------------------------------------------------------------------------------------------------------------------------------------------------------------------------------------
\section{Notation \& Background}

The algebra of complex (real) $n$-by-$n$ matrices is denoted by $\mat{n}{\bb{C}}$ ($\mat{n}{\bb{R}}$). A real matrix is called \emph{nonnegative} (\emph{positive}) if it is an entrywise nonnegative (positive) matrix. If $A$ is nonnegative (positive), then we write $A \geq 0$ ($A > 0$). 

An $n$-by-$n$ nonnegative matrix $A$ is called \emph{(row) stochastic} if every row sums to unity; \emph{column stochastic} if every column sums to unity; and \emph{doubly stochastic} if it is row stochastic and column stochastic.

Given $n \in \bb{N}$, the set ${F}_n := \{ p/q : 0\leq p < q \leq n,~\gcd(p,q)=1 \}$ is called the \emph{set of Farey fractions of order n}. If $p/q$, $r/s$ are elements of ${F}_n$ such that $p/q < r/s$, then $(p/q,r/s)$ is called a \emph{Farey pair (of order $n$)} if $x \not\in {F}_n$ whenever $p/q < x < r/s$. The Farey fractions $p/q$ and $r/s$ are called \emph{Farey neighbors} if $(p/q,r/s)$ or $(r/s, p/q)$ is a Farey pair.

The following is the celebrated \karp~Theorem in a form due to Ito \cite{i1997}. 

%-----------------------------------------------------------------------------------------------------------------------------------------------------------------------------------------------------------------------------
\begin{thm}[\karp] 
\label{thm:karpito}
The region $\Theta_n$ is symmetric with respect to the real axis, is included in the unit-disc $\{ z \in \bb{C} : |z| \leq 1\}$, and intersects the unit-circle $\{ z \in \bb{C} : |z| = 1\}$ at the points $\{ e^{2\pi\ii p/q} : p/q \in {F}_n \}$. The boundary of $\Theta_n$ consists of these points and of curvilinear arcs connecting them in circular order. 

Let the endpoints of an arc be $e^{2\pi\ii p/q}$ and $e^{2\pi\ii r/s}$ ($q < s$). Each of these arcs is given by the following parametric equation:  
\begin{equation}
t^{s} \left( t^{q} - \beta \right)^{\floor{n/q}} = \alpha^{\floor{n/q}} t^{q\floor{n/q}},~\alpha \in [0,1], ~\beta:=1-\alpha \label{ito_eq}.
\end{equation} 
\end{thm}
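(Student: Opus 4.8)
The plan is to follow the classical two-part strategy behind this result (that of Dmitriev--Dynkin and \karp, with Ito's later streamlining of the algebra). First one identifies the points of $\Theta_n$ on the unit circle and reduces everything to the connecting arcs, then one pins those arcs down by an extremal analysis of stochastic matrices. The qualitative features recorded in the introduction --- $\Theta_n$ compact, symmetric about the real axis, star-shaped about $0$ and $1$, and contained in the unit disc --- are immediate. The first substantive point is that $\Theta_n$ meets the unit circle exactly in $\{e^{2\pi\ii p/q} : p/q \in F_n\}$: if $A$ is stochastic and $\lambda \in \sig{A}$ with $|\lambda| = 1$, then reducing $A$ to Frobenius normal form isolates an irreducible block $B$ with $\sr{B} = 1$ whose peripheral spectrum contains $\lambda$, and Perron--Frobenius theory forces $\lambda$ to be an $h$-th root of unity, where $h$ is the period of $B$; since an irreducible matrix of period $h$ has order at least $h$, and $B$ is a principal submatrix of $A$, we get $h \le n$, so $\lambda = e^{2\pi\ii p/q}$ with $q \mid h$ and hence $p/q \in F_n$. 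Conversely, the $q$-cycle permutation matrix, padded by identity blocks to order $n$, realizes every $e^{2\pi\ii p/q}$ with $p/q \in F_n$. Since $\Theta_n$ is closed and star-shaped, between two of these peripheral points that are consecutive in circular order the boundary $\partial\Theta_n$ is a single continuous arc, and the entire content of the theorem is the equation of that arc.

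For the equation, fix the arc with endpoints $e^{2\pi\ii p/q}$ and $e^{2\pi\ii r/s}$, $q < s$, and let $\lambda \in \partial\Theta_n$ have argument strictly between $2\pi p/q$ and $2\pi r/s$. I would choose a supporting line of $\Theta_n$ at $\lambda$ with outer normal direction $\theta$ and take a stochastic matrix $A_0$ maximizing $\operatorname{Re}(e^{-\ii\theta}\mu)$ over all $\mu \in \sig{A}$ and all stochastic $A$; by compactness the maximum is attained, the maximizing eigenvalue $\lambda_0$ lies on $\partial\Theta_n$ on this arc, and the first-order variation of a simple eigenvalue (with matrix gradient $\overline{w}\,v^{\mathsf T}$ for suitably normalized left and right eigenvectors $w, v$), applied to perturbations of $A_0$ within the stochastic polytope, forces $A_0$ onto a face of very small dimension. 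Reading the resulting support conditions as a statement about the digraph of $A_0$, and passing to the single irreducible component that carries $\lambda_0$, one is driven to conclude that $A_0$ is permutation-similar to the canonical parametric matrix attached to the arc --- morally, one long cycle with a single chord, the chord carrying the split $\alpha$, $\beta = 1-\alpha$. This classification of the boundary-attaining stochastic matrices is the technical core of the theorem and is where essentially all of the difficulty lies; it is also precisely the point at which the matricial viewpoint of the present paper enters, since the canonical matrix is the realizing matrix we exhibit later.

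With the canonical matrix $A_\alpha$ in hand, the rest is computation. One checks that $A_\alpha$ is stochastic for every $\alpha \in [0,1]$; expands $\det(tI - A_\alpha)$ by cofactors along the chord to obtain, after stripping an inessential factor $t^{j}$, exactly $t^{s}(t^{q} - \beta)^{\floor{n/q}} - \alpha^{\floor{n/q}}t^{q\floor{n/q}}$, so the eigenvalue of $A_\alpha$ on the arc satisfies \eqref{ito_eq}; verifies the endpoint limits --- $\alpha = 1$ (so $\beta = 0$) puts $e^{2\pi\ii r/s}$ among the roots, and $\alpha = 0$ gives $e^{2\pi\ii p/q}$; and observes that this root moves monotonically as $\alpha$ traverses $[0,1]$, so that, together with the extremal analysis above, the arc is swept out exactly once. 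The passage from \karp's original normal form to the compact polynomial of \eqref{ito_eq} --- Ito's simplification --- is an algebraic identity among these characteristic polynomials, which I would prove by a direct factorization or by induction on $\floor{n/q}$. The single genuine obstacle is the middle step: the variational-plus-combinatorial argument that the boundary-attaining stochastic matrices are precisely the canonical parametric ones; everything else is Perron--Frobenius bookkeeping and determinant algebra.
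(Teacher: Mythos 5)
You should first be aware that the paper contains no proof of this statement: Theorem~\ref{thm:karpito} is the classical Karpelevi{\v{c}} Theorem, quoted in Ito's form and attributed to \cite{k1951}, \cite{d1990}, \cite{i1997}; the paper's own contribution begins afterwards, with realizing matrices for the arcs. So there is no internal argument to compare yours against, and your proposal has to stand on its own as a proof of the cited result. As such it is an outline, not a proof. The opening part is fine and standard: Perron--Frobenius applied to the irreducible block carrying a peripheral eigenvalue gives that the unit-circle points of $\Theta_n$ are exactly $\{e^{2\pi\ii p/q} : p/q \in F_n\}$, realized by padded cycle permutation matrices. But the entire substance of the theorem is the determination of the boundary arcs, and there your argument stops at exactly the hard point, as you yourself concede. ``The first-order variation \dots forces $A_0$ onto a face of very small dimension'' and ``one is driven to conclude that $A_0$ is permutation-similar to the canonical parametric matrix'' are statements of intent, not arguments: the classification of the extremal stochastic matrices is the long combinatorial--variational core of Dmitriev--Dynkin and Karpelevi{\v{c}}, and nothing in your sketch substitutes for it.

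Moreover, the canonical form you guess --- ``morally, one long cycle with a single chord, the chord carrying the split $\alpha$, $\beta=1-\alpha$'' --- is wrong for general arcs, and this is visible from \eqref{ito_eq} itself and from the paper's realizing matrices. A single cycle of length $s$ with one chord creating a cycle of length $q$ has characteristic polynomial of the trinomial form $t^{s}-\beta t^{s-q}-\alpha$, which matches \eqref{ito_eq} only when $\floor{n/q}=1$ (the paper's Type I arcs). When $\floor{n/q}>1$ the Ito polynomial $t^{s}(t^{q}-\beta)^{\floor{n/q}}-\alpha^{\floor{n/q}}t^{q\floor{n/q}}$ is not of that form, and the boundary-attaining matrices instead involve $\floor{n/q}$ cycles of length $q$ linked cyclically (see the Type II and Type III matrices in Table~\ref{tabone}, e.g.\ the matrix for $K(2/7,1/3)$ with three $3$-cycles); that multiplicity of $q$-cycles is precisely where the exponent $\floor{n/q}$ comes from. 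So even granting the extremal-face reduction, your proposed endpoint of the classification cannot yield the stated equation except for Type I arcs. Secondary, but also unproven: the claim that the root ``moves monotonically'' so the arc is swept exactly once, and the passage from Karpelevi{\v{c}}'s normal form to Ito's polynomial, which you defer to ``a direct factorization or induction'' without carrying it out. In short, the qualitative shell is right, but the theorem's actual content is missing.
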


\hyp{Figure}{fig:karpregions} contains the regions $\Theta_3$, $\Theta_4$, and $\Theta_5$.

\begin{figure}[H]
\centering
%-----------------------------------------------------------------------------------------------------------------------------------------------------------------------------------------------------------------------------
\begin{subfigure}{.32\textwidth}\centering
\begin{tikzpicture}
\begin{axis}[
axis lines=none,
axis equal image,
scale=0.32,
xlabel={$\Re{(\lambda)}$},
ylabel={$\Im{(\lambda)}$},
ylabel style={rotate=-90,, anchor=north},
xmin=-1,
xmax=1,
ymin=-1.0,
ymax=1.0,
xtick={-1,1},
ytick={-1,1}
]

% Theta3
\addplot[thick,black] coordinates{
(1,0) 
(-.5,.866025403784439) 
(-.5,-.866025403784439)
(1,0)}; 
\addplot[thick,black] coordinates{(-1,0) (-.5,0)};                              				% Arc 0

% Unit circle
\draw[color=gray] (axis cs:0,0) circle (1);
\end{axis}
\end{tikzpicture}
\caption{$\Theta_3$}
\label{fig:karpregion3}
\end{subfigure}
\hfill
%-----------------------------------------------------------------------------------------------------------------------------------------------------------------------------------------------------------------------------
\begin{subfigure}{.32\textwidth}\centering
\begin{tikzpicture}
\begin{axis}[
axis lines=none,
axis equal image,
scale=0.32,
xlabel={$\Re{(\lambda)}$},
ylabel={$\Im{(\lambda)}$},
ylabel style={rotate=-90, anchor=north},
xmin=-1,
xmax=1,
ymin=-1.0,
ymax=1.0,
xtick={-1,1},
ytick={-1,1}
]

% Theta4
\addplot[thick,black] coordinates{(1,0) (0,1)};                               % Arc 0
\addplot[thick,black] coordinates{(1,0) (0,-1)};                              % Arc 0
\addplot[thick,black] table {aarc1a.dat};                                     % Arc 1
\addplot[thick,black] table {aarc1b.dat};                                     % Arc 1
\addplot[thick,black] table {aarc2a.dat};                                     % Arc 1
\addplot[thick,black] table {aarc2b.dat};                                     % Arc 1

% Unit circle
\draw[color=gray] (axis cs:0,0) circle (1);
\end{axis}
\end{tikzpicture}
\caption{$\Theta_4$}
\label{fig:karpregion4}
\end{subfigure}
\hfill
%-----------------------------------------------------------------------------------------------------------------------------------------------------------------------------------------------------------------------------
\begin{subfigure}{.32\textwidth}\centering
\begin{tikzpicture}
\begin{axis}[
axis lines=none,
axis equal image,
scale=0.32,
xlabel={$\Re{(\lambda)}$},
ylabel={$\Im{(\lambda)}$},
ylabel style={rotate=-90, anchor=north},
xmin=-1,
xmax=1,
ymin=-1.0,
ymax=1.0,
xtick={-1,1},
ytick={-1,1}
]

% Theta5
\addplot[thick,black] coordinates{(1,0) (.309016994374947,.951056516295154)};                               % Arc 0
\addplot[thick,black] coordinates{(1,0) (.309016994374947,-.951056516295154)};                               % Arc 0
\addplot[thick,black] table {fivearcs1.dat};                                     						% Arc 1
\addplot[thick,black] table {fivearcs2.dat};                                     % Arc 1
\addplot[thick,black] table {fivearcs3.dat};                                     % Arc 1
\addplot[thick,black] table {fivearcs4.dat};                                     % Arc 1
\addplot[thick,black] table {fivearcs5.dat};                                     % Arc 1
\addplot[thick,black] table {fivearcs6.dat};                                     % Arc 1
\addplot[thick,black] table {aarc1a.dat};                                     % Arc 1
\addplot[thick,black] table {aarc1b.dat};                                     % Arc 1

% Unit circle
\draw[color=gray] (axis cs:0,0) circle (1);
\end{axis}
\end{tikzpicture}
\caption{$\Theta_5$}
\label{fig:karpregion5}
\end{subfigure}
\caption{$\Theta_n$, $3 \leq n \leq 5$}
\label{fig:karpregions}
\end{figure}
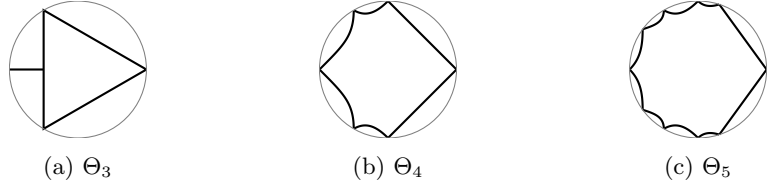

For $n \in \bb{N}$, we call the collection of such arcs \emph{the K-arcs (of order $n$)} and we denote by $K(p/q,r/s) = K_n(p/q,r/s)$ the arc connecting $e^{2 \pi \ii p /q}$ and $e^{2 \pi \ii r /s}$, when $p/q$ and $r/s$ are Farey neighbors. Notice that the number of K-arcs equals $| F_n| = 1 + \sum_{k=1}^n \phi(k)$, where $\phi$ denotes \emph{Euler's totient function}. 

For Farey neighbors $p/q$ and $r/s$, $q < s$, we call the collection of equations \eqref{ito_eq} the \emph{Ito equations (with respect to $\{p/q,r/s\}$)} and the collection of polynomials 
\[ f_\alpha (t) := t^{s} \left( t^{q} - \beta \right)^{\floor{n/q}} - \alpha^{\floor{n/q}} t^{q\floor{n/q}},~\alpha \in [0,1]\] 
the \emph{Ito polynomials (with respect to $\{p/q,r/s\}$)}.   

A \emph{directed graph} (or simply \emph{digraph}) $\Gamma = (V,E)$ consists of a finite, nonempty set $V$ of \emph{vertices}, together with a set $E \subseteq V \times V$ of \emph{arcs}. For $A \in \mat{n}{\bb{C}}$, the \emph{directed graph} (or simply \emph{digraph}) of $A$, denoted by $\Gamma = \dg{A}$, has vertex set $V = \{ 1, \dots, n \}$ and arc set $E = \{ (i, j) \in V \times V : a_{ij} \neq 0\}$. 

A digraph $\Gamma$ is called \emph{strongly connected} if for any two distinct vertices $i$ and $j$ of $\Gamma$, there is a path in $\Gamma$ from $i$ to $j$. Following \cite{br1991}, we consider every vertex of $V$ as strongly connected to itself. A strong digraph is \emph{primitive} if the greatest common divisor of all its cycle-lengths is one, otherwise it is \emph{imprimitive}. 

For $n \geq 2$, an $n$-by-$n$ matrix $A$  is called \emph{reducible} if there exists a permutation matrix $P$ such that
\begin{align*}
P^\top A P =
\begin{bmatrix}
A_{11} & A_{12} \\
0 & A_{22}
\end{bmatrix},
\end{align*}
where $A_{11}$ and $A_{22}$ are nonempty square matrices. If $A$ is not reducible, then A is called \emph{irreducible}. It is well-known that a matrix $A$ is irreducible if and only if $\dg{A}$ is strongly connected (see, e.g., \cite[Theorem 3.2.1]{br1991} or \cite[Theorem 6.2.24]{hj2013}). 

An irreducible nonnegative matrix is called \emph{primitive} if, in its digraph, the set of cycle-lengths is relatively prime; otherwise it is \emph{imprimitive}. 

For $n \in \bb{N}$, denote by $C_n$ the \emph{basic circulant}, i.e.,  
\[ C_n =
\left[ 
\begin{array}{cc}
0 & I_{n-1} \\
1 & 0
\end{array} \right]. \]
Note that the digraph of $C_n$ is a cycle of length $n$.

Given an $n$-by-$n$ matrix $A$, the \emph{characteristic polynomial of $A$}, denoted by $\chi_A$, is defined by $\chi_A = \det{(tI - A)}$. The \emph{companion matrix} $C = C_f$ of a monic polynomial $f(t) = t^n + \sum_{k=1}^{n} c_{k} t^{n - k}$ is the $n$-by-$n$ matrix defined by
\[ C = 
\left[\begin{array}{cc}
0 & I_{n-1} \\
-c_n & -c
\end{array} \right], \]
where $c = [c_{n-1}~\cdots~c_1]$. It is well-known that $\chi_C = f$. Notice that $C$ is irreducible if and only if $c_n \neq 0$. 

%The following result \cite[Theorem 1.1.4]{p2010} is attributed to Ostrovosky and a matrix-theoretic proof is given in \cite{pp_proc}. 
%
%%-----------------------------------------------------------------------------------------------------------------------------------------------------------------------------------------------------------------------------
%\begin{thm}[Ostrovosky]
%\label{thm:ost}
%Let $f(t) = t^n - c_1 t^{n-1} - \cdots - c_{n-1} t - c_n$, where $c_k \geq 0$, $k=1,\dots,n$. Suppose that $c_{i_1},\dots,c_{i_\ell}$ are the positive coefficients of $f$. If $\ell > 1$ and $\gcd{(i_1,\dots,i_\ell)} = 1$, then $f$ has a unique positive root $\rho$ and the absolute values of the other roots are less than $\rho$. %Conversely, if $p$ has a unique positive root $\rho$ and the absolute values of the other roots are less than $\rho$, then $\ell > 1$ and $\gcd{(i_1,\dots,i_\ell)} = 1$.
%\end{thm}

%-----------------------------------------------------------------------------------------------------------------------------------------------------------------------------------------------------------------------------
\section{Realizing-matrices}
%-----------------------------------------------------------------------------------------------------------------------------------------------------------------------------------------------------------------------------

%-----------------------------------------------------------------------------------------------------------------------------------------------------------------------------------------------------------------------------
\begin{lem}\label{lem:det}
Let $A \in \mat{n}{\bb{C}}$. If $B = A + \alpha e_k e_\ell^\top$, then $\det(B) = \det(A) + (-1)^{k + \ell} \alpha \det(A_{k\ell})$.
\end{lem}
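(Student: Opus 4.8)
The plan is to use the multilinearity of the determinant as a function of the rows of the matrix. Write $a_k^\top$ for the $k$th row of $A$; then the $k$th row of $B$ is $a_k^\top + \alpha e_\ell^\top$, while every other row of $B$ agrees with the corresponding row of $A$. Since $\det$ is linear in the $k$th row when the remaining rows are held fixed, I would split
\[
\det(B) = \det(A) + \det(A'),
\]
where $A'$ is the matrix obtained from $A$ by replacing its $k$th row with $\alpha e_\ell^\top$ and leaving all other rows unchanged.

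Next I would evaluate $\det(A')$ by Laplace (cofactor) expansion along its $k$th row. That row has a single nonzero entry, namely $\alpha$ in column $\ell$, so the expansion collapses to the single term $\alpha \cdot (-1)^{k+\ell} \det\!\big(A'_{k\ell}\big)$, where $A'_{k\ell}$ is the $(n-1)$-by-$(n-1)$ submatrix of $A'$ obtained by deleting row $k$ and column $\ell$. The one observation that needs to be made explicit is that deleting the $k$th row removes precisely the row in which $A'$ was altered, so $A'_{k\ell} = A_{k\ell}$. Combining this with the previous display gives $\det(B) = \det(A) + (-1)^{k+\ell}\alpha\det(A_{k\ell})$, as claimed.

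Alternatively, one can argue in a single pass: expand $\det(B)$ directly along row $k$, note that each cofactor in that expansion is computed from the entries of $B$ lying outside row $k$ — which coincide with the corresponding entries of $A$ — hence equals the matching cofactor of $A$, and then substitute $b_{kj} = a_{kj} + \alpha\,\delta_{j\ell}$ and separate the sum. Both routes are elementary; there is essentially no genuine obstacle here, and the only point requiring a word of care is the identification $A'_{k\ell} = A_{k\ell}$ (equivalently $B_{k\ell} = A_{k\ell}$), which is exactly why the formula involves a minor of $A$ itself rather than of the perturbed matrix. I would keep the write-up to a few lines, favoring the multilinearity argument for its brevity.
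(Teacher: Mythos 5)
Your proposal is correct and matches the paper's argument, which is simply a Laplace expansion along the $k$-th row (or column) of $B$; your multilinearity route and your ``single pass'' alternative are both just minor repackagings of that same elementary expansion.
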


\begin{proof}
Take either a Laplace-expansion along the $k$-th row or the $k$-th column of $B$. 
%\begin{align*}
%\det(B) 
%&= \sum_{j} (-1)^{k+j} b_{kj} B_{kj} 									\\
%&= \sum_{j \neq \ell} (-1)^{k+j} a_{kj} A_{kj} + (-1)^{k+\ell} (a_{k\ell} + \alpha ) A_{k\ell} \\
%&= \sum_{j} (-1)^{k+j} a_{kj} A_{kj} + (-1)^{k + \ell} \alpha A_{k\ell} 				\\
%&= \det(A) + (-1)^{k + \ell} \alpha \det(A_{k\ell}). \qedhere
%\end{align*}
\end{proof} 

%-----------------------------------------------------------------------------------------------------------------------------------------------------------------------------------------------------------------------------
\begin{thm}\label{thm:main}
For each K-arc $K_n (p/q,r/s)$, there is a parametric, stochastic matrix $M = M(\alpha)$, $0\leq \alpha \leq 1$, such that each point $\lambda = \lambda(\alpha)$ of the arc is an eigenvalue of $M$. Furthermore, if $\alpha \in (0,1)$, then $M$ is primitive.
\end{thm}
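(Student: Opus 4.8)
The plan is to construct $M(\alpha)$ explicitly as a companion-type matrix whose characteristic polynomial is (up to a monomial factor and sign) the Ito polynomial $f_\alpha(t) = t^s(t^q-\beta)^{\floor{n/q}} - \alpha^{\floor{n/q}} t^{q\floor{n/q}}$, and then to verify that this polynomial has nonnegative ``companion data'' so that the associated matrix is stochastic. First I would observe that every point $\lambda(\alpha)$ on the K-arc satisfies $f_\alpha(\lambda) = 0$ by \hyp{Theorem}{thm:karpito}, so it suffices to exhibit a stochastic $M(\alpha)$ with $f_\alpha \mid \chi_{M(\alpha)}$; since $\deg f_\alpha = s \le n$, padding with a monomial factor $t^{n-\deg f_\alpha}$ (which corresponds to appending a nilpotent block, or adjusting the digraph) will bring the degree up to $n$ without introducing new nonzero eigenvalues on the arc. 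The natural candidate is the companion matrix of $g_\alpha(t) := t^{\,n-s} f_\alpha(t)$, provided its coefficients have the right signs; I would expand $(t^q-\beta)^{\floor{n/q}}$ by the binomial theorem, collect the $t^{q\floor{n/q}}$ terms (the $\alpha^{\floor{n/q}}$ from $f_\alpha$ cancels against the $\beta^0$ binomial term precisely because $\alpha+\beta=1$ does \emph{not} quite do it — rather the leading binomial term is $t^{q\floor{n/q}}$ with coefficient $1$, and subtracting $\alpha^{\floor{n/q}}t^{q\floor{n/q}}$ leaves coefficient $1-\alpha^{\floor{n/q}} = \beta(1+\alpha+\cdots+\alpha^{\floor{n/q}-1}) \ge 0$), and check that all remaining coefficients, which are $\pm\binom{\floor{n/q}}{j}\beta^{j}$ times a monomial, alternate in sign correctly so that $-c_n,\dots,-c_1 \ge 0$ and sum appropriately.

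The second main step is the stochasticity bookkeeping: a companion matrix $C_g$ with $g(t)=t^n+\sum c_k t^{n-k}$ is (row) stochastic exactly when $-c_k \ge 0$ for all $k$ and $\sum_k (-c_k) = 1$, equivalently $g(1) = 0$ and all the nontrivial coefficients are $\le 0$. Here $f_\alpha(1) = 1\cdot(1-\beta)^{\floor{n/q}} - \alpha^{\floor{n/q}} = \alpha^{\floor{n/q}} - \alpha^{\floor{n/q}} = 0$, so $g_\alpha(1)=0$ automatically, which gives the row-sum condition for free; the content of the step is the sign pattern. I expect the coefficients of $g_\alpha$ to be, in order, a string of zeros, then $-1$ (or $-\beta^{j}$-type negative terms), then blocks governed by the binomial expansion; with $\beta \in [0,1]$ every $\beta^j \ge 0$ and the signs $(-1)^j$ inside $(t^q-\beta)^{\floor{n/q}}$ are exactly cancelled by the $(-1)$ in $-c_k = -[\text{coeff}]$, so the pattern should come out nonnegative. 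One has to be a little careful when two binomial contributions land in the same degree (possible when $s$ and $q\floor{n/q}$ are close), but $q<s$ and the Farey-neighbor structure should keep the relevant exponents $s, s-q, s-2q,\dots$ and $q\floor{n/q}$ separated or, when they collide, combine with the same sign.

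For primitivity when $\alpha \in (0,1)$, I would argue at the level of the digraph of $M(\alpha)$: the companion matrix $C_g$ with $c_n \ne 0$ (here the constant term of $g_\alpha$ is $\pm\beta^{\floor{n/q}} \ne 0$ since $\beta \in (0,1)$) is irreducible by the remark at the end of Section~2, and its digraph is a single $n$-cycle together with the extra arcs coming from the nonzero $c_k$; it is primitive iff the gcd of the induced cycle lengths is $1$. Since for $\alpha\in(0,1)$ the polynomial $g_\alpha$ has at least two nonzero coefficients whose ``index gaps'' are coprime (the $t^{n-1}$-adjacent term from the $t^s$ factor together with the constant term gives cycles of lengths $1$ and $n$, forcing gcd $1$ — or more carefully, a term $-c_1 t^{n-1}$ yields a loop, which already makes the digraph primitive), primitivity follows; I would spell out which coefficient is guaranteed nonzero, namely the one of degree $n-1$, which equals (up to sign) $\floor{n/q}\beta$ when $q=1$ and otherwise handle small $q$ separately, falling back on the loop-at-a-vertex argument or on exhibiting two cycles of coprime length.

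The main obstacle I anticipate is \emph{not} the existence of the polynomial identity — that is essentially \hyp{Theorem}{thm:karpito} read backwards — but the clean verification of the nonnegative sign pattern of $g_\alpha$ uniformly in $n$, $p/q$, $r/s$, and $\alpha$, especially reconciling the two exponent progressions $\{s - jq\}$ and $\{q\floor{n/q} - jq\}$ from the two terms of $f_\alpha$ and confirming they never produce a positive off-diagonal-data coefficient; a secondary subtlety is ensuring the padding factor $t^{n-s}$ (and the resulting reducible-looking block structure) does not destroy irreducibility, which I would sidestep by checking $s \le n$ with equality exactly when no padding is needed, and when $s<n$ arguing that the constant term of $g_\alpha$ is still nonzero so $C_{g_\alpha}$ remains irreducible.
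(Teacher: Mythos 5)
There is a genuine gap, and it sits exactly where you located your ``main obstacle'': the sign pattern you need is simply false whenever $\floor{n/q}\geq 2$. Expanding $(t^q-\beta)^{\floor{n/q}}$ gives coefficients $\binom{\floor{n/q}}{j}(-\beta)^j$, which are \emph{positive} for every even $j\geq 2$; these terms do not cancel against anything (the subtracted term $\alpha^{\floor{n/q}}t^{q\floor{n/q}-s}$ or $\alpha^{\floor{n/q}}$ sits in a single degree, which by $\gcd(q,s)=1$ generically misses the progression $\{q(\floor{n/q}-j)\}$). So the companion matrix of the reduced Ito polynomial has negative entries for every arc with $\floor{n/q}\geq 2$. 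Concretely, for the arc with endpoints $1$ and $e^{2\pi\ii/n}$ the reduced polynomial is $(t-\beta)^n-\alpha^n$, whose $t^{n-2}$-coefficient is $\binom{n}{2}\beta^2>0$, and for $K_9(2/7,1/3)$ it is $(t^3-\beta)^3-\alpha^3t^2$, whose $t^3$-coefficient is $3\beta^2>0$; in neither case is the companion matrix nonnegative. Your construction is correct only in the case $\floor{n/q}=1$, where the reduced polynomial is $t^s-\beta t^{s-q}-\alpha$ and its companion matrix is indeed stochastic and primitive --- this is precisely the paper's ``Type I'' case. A secondary flaw: padding by $t^{n-s}$ forces the constant term of $g_\alpha$ to vanish, so $C_{g_\alpha}$ is automatically reducible; your proposed sidestep (``the constant term of $g_\alpha$ is still nonzero'') cannot hold, and the primitivity claim of the theorem would be lost. (Also note the realizing matrix need not be $n$-by-$n$; the paper's matrices have size $s$ or $q\floor{n/q}$.)

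The cases $\floor{n/q}\geq 2$ require a genuinely different construction, which is the actual content of the paper's proof: instead of a single companion matrix, one takes a convex combination $M(\alpha)=\alpha X+\beta Y$ of two nonnegative $0$--$1$ matrices with compatible circulant/companion structure --- $\alpha C_n+\beta I$ for the arc ending at $e^{2\pi\ii/n}$, and, for the remaining two types, $\beta$ times a direct sum of copies of $C_q$ (possibly preceded by a nilpotent Jordan block) coupled by $\alpha$-entries that form a long cycle; the characteristic polynomial is then computed by viewing $M$ as a rank-one perturbation of a block upper-triangular matrix (the paper's Lemma on $\det(A+\alpha e_ke_\ell^\top)$), which produces exactly $(t^q-\beta)^{\floor{n/q}}-\alpha t^{q\floor{n/q}-s}$ or $t^{s-q\floor{n/q}}(t^q-\beta)^{\floor{n/q}}-\alpha^{\floor{n/q}}$. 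If you want to salvage your approach, you must abandon the companion-matrix ansatz outside Type I and build this kind of block structure, proving both the determinant identity and the irreducibility/primitivity from the cycles of lengths $q$ and $s$ in the resulting digraph.
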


\begin{proof}
Let $p/q$ and $r/s$ be Farey neighbors, where $q < s$. Note that $s \neq q\floor{n/q}$ since $q$ and $s$ are relatively prime. 

First, we consider the case in which  $p/q = 0$ and $r/s = 1/n$ (which we call the \emph{Type 0 arc}). Then \eqref{ito_eq} reduces to $(t  - \beta)^n - \alpha^n = 0$. If 
\[ M = M(\alpha) := \alpha  C_n + \beta I  
= 
\begin{bmatrix}
\beta & \alpha & 	 	\\
 & \beta & \alpha 		\\
 & & \ddots & \ddots 	\\
 & & & \beta & \alpha 	\\
\alpha & & & & \beta
\end{bmatrix} \in \mat{\bb{R}}{n}, \]
then 
\begin{align*} 
\chi_M (t) 
&= \det{(tI - (\alpha  C_n + \beta I))} \\
&= \det{((t - \beta)I - \alpha  C_n)} \\
& = \chi_{\alpha C_n} (t - \beta) \\
& = (t - \beta)^n - \alpha^n.
\end{align*} 
If $\alpha \in (0,1)$, then $\dg{M}$ contains directed-cycles of length one and $n$. Hence, $M$ is irreducible and since the greatest common divisor of all cycle-lengths of $\dg{M}$ is obviously one, $M$ is primitive.

Next, we consider the case in which $\floor{n/q}= 1$ (herein referred to as a \emph{Type I arc}). Then \eqref{ito_eq} reduces to $t^{s}  - \beta  t^{s-q} - \alpha = 0$. If 
\begin{align} 
M = M(\alpha) :=  
\begin{bmatrix}
z & I \\
\alpha & \beta e_{s-q}^\top 
\end{bmatrix} \in \mat{s}{\bb{R}}, \label{typeonemats}
\end{align} 
then $M \geq 0$ and $\chi_{M} (t) = t^{s}  - \beta  t^{s-q} - \alpha$. If $\alpha \in (0,1)$, then $\dg{M}$ contains $\dg{C_n}$. Hence, $M$ is irreducible, and, since $\gcd{(s-(s-q),s)} = \gcd{(q,s)}=1$, it must be primitive.

Next, we consider the case in which $\floor{n/q}> 1$ and $s < q\floor{n/q}$ (which we call a \emph{Type II arc}). Then \eqref{ito_eq} reduces to 
\begin{align*}
(t^q  - \beta)^{\floor{n/q}} - \alpha^{\floor{n/q}} t^{q\floor{n/q} - s} = 0.
\end{align*}
Consider the nonnegative matrix $M = M(\alpha) := \alpha X +  \beta Y$, where $X$ is the nonnegative companion matrix of the polynomial $t^{q\floor{n/q}} - t^{q\floor{n/q} - s}$, and
\begin{align*} 
Y := \bigoplus_{k=1}^{\floor{n/q}} C_q =
\begin{bmatrix}
C_q & \\
 & \ddots & \\
 & & C_q
\end{bmatrix} \in \mat{q\floor{n/q}}{\bb{R}}.
\end{align*} 
Since $1 < q\floor{n/q} - s + 1 \leq n - s + 1 < q + 1$, it follows that 
\begin{align*} 
M =
\left[ 
\begin{array}{*{14}{c}}
& 1 & & & \vline & & & & & \vline 											\\
& & \ddots & & \vline & & & & & \vline 										\\
& & & 1 & \vline & & & & & \vline 											\\
\beta & & & & \vline & \alpha & & & & \vline 									\\
\hline 
& & & & \vline & \multicolumn{4}{c}{\multirow{4}{*}{\Large $\ddots$}} & \vline & & &  	\\
& & & & \vline & & & & & \vline & & &  										\\
& & & & \vline & & & & &\vline & & &  										\\
& & & & \vline & & & & & \vline & \alpha 										\\
\hline
& & & & \vline & & & & & \vline & & 1 										\\
& & & & \vline & & & & & \vline & & & \ddots 									\\
& & & & \vline & & & & & \vline & & & & 1 									\\
\multicolumn{4}{c}{\alpha e_{q\floor{n/q} - s + 1}^\top} & \vline & & & & & \vline & \beta & &   
\end{array} 
\right],
\end{align*}
where $e_{q\floor{n/q} - s + 1} \in \bb{R}^q$. Because $M - \alpha e_{q\floor{n/q}} e_{q\floor{n/q} - s + 1}^\top$ is block upper-triangular, it follows from \hyp{Lemma}{lem:det} that 
\begin{align*}
\chi_M (t) 
&= (t^q - \beta)^{\floor{n/q}} + 																			\\
&\quad (-1)^{2 q\floor{n/q} -s + 1} (-\alpha) t^{q\floor{n/q} - s} (-\alpha)^{\floor{n/q} -1} (-1)^{q\floor{n/q} - 1 - (q\floor{n/q} - s) - (\floor{n/q} -1)}	\\
&= (t^q - \beta)^{\floor{n/q}} + (-1)^{2 q\floor{n/q}+ 1} \alpha t^{q\floor{n/q} - s} 											\\
&= (t^q - \beta)^{\floor{n/q}} - \alpha t^{q\floor{n/q} - s}.
\end{align*}
If $\alpha \in (0,1)$, then the directed graph contains $\floor{n/q}$ strongly connected components and the graph on these components, determined whether off-diagonal blocks are nonzero, is also strongly connected; hence, the entire graph is strongly connected, i.e., $M$ is irreducible. Furthermoe, since $\dg{M}$ contains cycles of length $q$ and $q\floor{n/q} - (q\floor{n/q} - s + 1) + 1 = s$, it follows that $M$ is primitive .

Finally, we consider the case when $\floor{n/q}> 1$ and $s > q\floor{n/q}$ (herein referred to as a \emph{Type III arc}). For convenience, let $d = s - q\floor{n/q}$. Then \eqref{ito_eq} reduces to 
\begin{align*}
t^{d} (t^q  - \beta)^{\floor{n/q}} - \alpha^{\floor{n/q}}  = 0.
\end{align*}
Consider the nonnegative matrix $M = M(\alpha) := \alpha C_s + \beta Y$, where
\begin{align*} 
Y =
\begin{bmatrix}
\jord{d}{0}  & 				\\
		& C_q & & 			\\
		&        & \ddots & 		\\
		& 	&	    & C_q
\end{bmatrix} + e_d e_{d+1}^\top \in \mat{s}{\bb{R}}.
\end{align*}
Then 
\[ 
M =
\kbordermatrix{& & & & & d & & 													\\
  & 0 & 1 & & & & \vline & & & & & \vline & & & & & \vline & & & & 								\\
  & & 0 & 1 & & & \vline & & & & & \vline & & & & & \vline & & & &								\\
  & & & \ddots & \ddots & & \vline & & & & & \vline & & & & & \vline & & & &						\\
  & & & & 0 & 1 & \vline & & & & & \vline & & & & & \vline & & & & 								\\
d & & & & & 0 & \vline & 1 & & & & \vline & & & & & \vline& & & & 								\\
\cline{2-20}
& & & & & & \vline & & 1 & & & \vline & & & & & \vline & & & & 								\\
& & & & & & \vline & & & \ddots  & & \vline & & & & & \vline & & & &							\\
& & & & & & \vline & & & & 1 & \vline & & & & & \vline & & & &								\\
& & & & & & \vline & \beta & & & & \vline & \alpha & & & & \vline & & & & 							\\
\cline{2-20}
& & & & & & \vline & & & & & \vline & \multicolumn{4}{c}{\multirow{4}{*}{\Large $\ddots$}} & \vline & & & 	\\
& & & & & & \vline & & & & & \vline & & & & & \vline										\\
& & & & & & \vline & & & & & \vline & & & & & \vline										\\
& & & & & & \vline & & & & & \vline & & & & & \vline & \alpha									\\
\cline{2-20}
& & & & & & \vline & & & & & \vline & & & & & \vline & & 1 & & 								\\
& & & & & & \vline & & & & & \vline & & & & & \vline & & & \ddots &  							\\
& & & & & & \vline & & & & & \vline & & & & & \vline & & &  & 1 								\\
& \alpha & & & & & \vline & & & & & \vline & & & & & \vline & \beta & & & }.
\]
Since $M - \alpha e_s e_1^\top$ is block upper-triangular, following \hyp{Lemma}{lem:det}, 
\begin{align*}
\chi_M (t) 
&= t^d (t^q - \beta)^{\floor{n/q}} + (-1)^{s+1}(-\alpha)(-\alpha)^{\floor{n/q}-1}(-1)^{s - 1 -(\floor{n/q}-1)} 	\\
&= t^d (t^q - \beta)^{\floor{n/q}} + (-1)^{2s+1}\alpha^{\floor{n/q}}							 \\
&= t^d (t^q - \beta)^{\floor{n/q}} - \alpha^{\floor{n/q}}.
\end{align*}
If $\alpha \in (0,1)$, then $\dg{M}$ contains $\dg{C_n}$ as a subgraph. Hence, $M$ is irreducible, and since $\dg{M}$ clearly contains cycles of length $q$ and $s$, $M$ is primitive.
\end{proof}

%-----------------------------------------------------------------------------------------------------------------------------------------------------------------------------------------------------------------------------
\begin{rem}
Notice that the realizing matrices for arcs of Type I, II, and II all have trace zero. 
\end{rem}

%-----------------------------------------------------------------------------------------------------------------------------------------------------------------------------------------------------------------------------
\begin{ex}
\hyp{Table}{tabone} contains realizing matrices illustrating each type of arc when $n=9$ (the smallest order for which each arc-type appears). 

\begin{table}[H]
\centering
\begin{tabular}{ccc}
$K\left(\frac{p}{q},\frac{r}{s} \right)$ & \emph{Type} & $M(\alpha)$, $\beta := 1 - \alpha$	\vspace*{5pt} \\
%\hline \vspace*{5pt}
$K\left(\frac{1}{9},\frac{1}{8} \right)$ & I & 
$\begin{bmatrix}
0 & 1 & 0 & 0 & 0 & 0 & 0 & 0 & 0 \\
0 & 0 & 1 & 0 & 0 & 0 & 0 & 0 & 0 \\
0 & 0 & 0 & 1 & 0 & 0 & 0 & 0 & 0 \\
0 & 0 & 0 & 0 & 1 & 0 & 0 & 0 & 0 \\
0 & 0 & 0 & 0 & 0 & 1 & 0 & 0 & 0 \\
0 & 0 & 0 & 0 & 0 & 0 & 1 & 0 & 0 \\
0 & 0 & 0 & 0 & 0 & 0 & 0 & 1 & 0 \\
0 & 0 & 0 & 0 & 0 & 0 & 0 & 0 & 1 \\
\alpha & \beta & 0 & 0 & 0 & 0 & 0 & 0 & 0 
\end{bmatrix}$ \vspace*{5pt}								\\ 
%\hline 
$K\left(\frac{2}{7},\frac{1}{3} \right)$ & II & 
$ 
\left[ \begin{array}{*{11}{c}}
0 & 1 & 0 & \vline &  0 & 0 & 0 & \vline & 0 & 0 & 0 \\
0 & 0 & 1 &  \vline &0 & 0 & 0 & \vline & 0 & 0 & 0 \\
\beta & 0 & 0 & \vline & \alpha & 0 & 0 & \vline & 0 & 0 & 0 \\
\cline{1-11}
0 & 0 & 0 & \vline & 0 & 1 & 0 & \vline & 0 & 0 & 0 \\
0 & 0 & 0 &  \vline &0 & 0 & 1 & \vline & 0 & 0 & 0 \\
0 & 0 & 0 &  \vline &\beta & 0 & 0 & \vline & \alpha & 0 & 0 \\
\cline{1-11}
0 & 0 & 0 &  \vline &0 & 0 & 0 & \vline & 0 & 1 & 0 \\
0 & 0 & 0 &  \vline &0 & 0 & 0 & \vline & 0 & 0 & 1 \\
0 & 0 & \alpha & \vline & 0 & 0 & 0 & \vline & \beta & 0 & 0
\end{array} \right]$ \vspace*{5pt} 							\\ 
%\hline \vspace*{5pt}
$K\left(\frac{2}{9},\frac{1}{4} \right)$ & III & 
$
\left[ \begin{array}{*{11}{c}}
0 & \vline & 1 & 0 & 0 & 0 & \vline & 0 & 0 & 0 & 0 \\
\cline{1-11}
0 & \vline & 0 & 1 & 0 & 0 & \vline & 0 & 0 & 0 & 0 \\
0 & \vline & 0 & 0 & 1 & 0 & \vline & 0 & 0 & 0 & 0 \\
0 & \vline & 0 & 0 & 0 & 1 & \vline & 0 & 0 & 0 & 0 \\
0 & \vline & \beta & 0 & 0 & 0 & \vline & \alpha & 0 & 0 & 0 \\
\cline{1-11}
0 & \vline & 0 & 0 & 0 & 0 & \vline & 0 & 1 & 0 & 0 \\
0 & \vline & 0 & 0 & 0 & 0 & \vline & 0 & 0 & 1 & 0 \\
0 & \vline & 0 & 0 & 0 & 0 & \vline & 0 & 0 & 0 & 1 \\
\alpha & \vline & 0 & 0 & 0 & 0 & \vline & \beta & 0 & 0 & 0
\end{array} \right]$
\end{tabular}
\caption{Realizing matrices for arcs of Type I, II, and II when $n=9$.}
\label{tabone}
\end{table}

Let $\mathcal{M} := \{ M(\alpha) : \alpha \in [0,1] \}$ be the set of realizing matrices for the arc $K(1/9,1/8)$. For $d \in \bb{N}$, let $\mathcal{M}^d = \{ M(\alpha)^d : M(\alpha) \in \mathcal{M} \}$. \hyp{Theorem}{thm:arcpowers} shows that certain powers of the realizing matrices for the arc realize other arcs: in particular, $\mathcal{M}^2$, $\mathcal{M}^3$, and $\mathcal{M}^4$ form a set of realizing matrices for the arcs $K(2/9,1/4)$, $K(1/3,3/8)$, and $K(4/9,1/2)$, respectively. 
\end{ex}

%-----------------------------------------------------------------------------------------------------------------------------------------------------------------------------------------------------------------------------
\section{Differentiability of the Arcs}
%-----------------------------------------------------------------------------------------------------------------------------------------------------------------------------------------------------------------------------
We investigate here the smoothness of the K-arcs, a natural question not previously addressed. 

To that end, let $f$ and $g$ be monic polynomials of degree $n$. For $\alpha \in [0,1]$, let $c_\alpha := \alpha f + (1 - \alpha) g$. Since the roots of a polynomial vary continuously with respect to its coefficients, it follows that the locus $L(f,g) := \left\{ t \in \bb{C}: c_\alpha(t) = 0,~\alpha \in [0,1] \right\}$ consists of $n$ continuous paths (counting multiplicities), each of which connects a root of $g$ to a root of $f$, whose points depend continuously on the parameter $\alpha$ (if $f$ and $g$ share a root, then there is a degenerate path at this root). 

%\begin{align*}
%P(\mu_j, \lambda_i) 
%&:= \left\{ t \in \bb{C} : c_\alpha(t)=0,~\alpha \in (0,1),~c_0(\mu_j)=0,~c_1(\lambda_i)=0, \right. 	\\
%& \left. \qquad c_\alpha (t) \neq 0,~\forall t \in (R_p\backslash \lambda_i) \cup (R_q \backslash \mu_j) \right\}. 
%\end{align*}

Denote by $P(\mu, \lambda)$ the path that starts at the root $\mu$ of $g$ and terminates at the root $\lambda$ of $f$ ($\mu \neq \lambda$). If $r = r(\alpha) \in P(\mu, \lambda)$, $\alpha \in (0,1)$, then 
\begin{align*}
0 = \alpha f(r) + (1 - \alpha) g(r).
\end{align*}
Differentiating with respect to $\alpha$ yields
\begin{align*}
0 = f(r) + \alpha g'(r) r' - g(r) + (1-\alpha)g'(r)r' = f(r) - g(r) + r' c'_\alpha (r).
\end{align*}
If $c_\alpha'(r) \neq 0$ (i.e., if $r$ is not a multiple root of $c_\alpha$), then 
\begin{equation*}
r' = \frac{g(r) - f(r)}{c_\alpha'(r)}.
\end{equation*} 
Thus, the path $P(\mu, \lambda)$ is differentiable at $r$ if $r$ is not a multiple root of $c_\alpha$ \cite{i2011}.

\begin{prop}\label{distincteigs}
For $n \geq 4$, let 
\begin{equation}
f_\alpha (t) := t^n - \beta t - \alpha, ~\alpha \in [0,1], ~\beta := 1 - \alpha. \label{polyalpha}
\end{equation} 
\begin{enumerate}
[label=(\roman*)]
\item If $n$ is even, then $f_\alpha$ has $n$ distinct roots.
\item If $n$ is odd and $\alpha \geq \beta$,  then $f_\alpha$ has $n$ distinct roots.
\item If $n$ is odd and $\alpha < \beta$,  then $f_\alpha$ has a multiple root if and only if
\[ n^n \alpha^{n-1} - (n-1)^{n-1} \beta^n = n^n \alpha^{n-1} + (n-1)^{n-1} (\alpha -1)^n = 0. \]  
\end{enumerate}
\end{prop}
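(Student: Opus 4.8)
The plan is to analyze when $f_\alpha$ and its derivative $f_\alpha'$ share a common root, since a multiple root of $f_\alpha$ is precisely a common root of $f_\alpha$ and $f_\alpha'$. We compute $f_\alpha'(t) = n t^{n-1} - \beta$, so any multiple root $r$ satisfies $r^{n-1} = \beta/n$. First I would substitute this back into $f_\alpha(r) = r^n - \beta r - \alpha = r \cdot r^{n-1} - \beta r - \alpha = r(\beta/n) - \beta r - \alpha = -\beta r(n-1)/n - \alpha = 0$, which forces $r = -\frac{n\alpha}{(n-1)\beta}$ (valid provided $\beta \neq 0$; the edge cases $\alpha = 0$ and $\alpha = 1$ should be checked separately, where $f_\alpha$ is easily seen to have distinct roots for $n \geq 2$, e.g. $t^n - t = t(t^{n-1}-1)$ has distinct roots). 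Thus a multiple root exists if and only if this particular real value $r$ also satisfies $r^{n-1} = \beta/n$, i.e. $\left(-\frac{n\alpha}{(n-1)\beta}\right)^{n-1} = \frac{\beta}{n}$. Clearing denominators gives $(-1)^{n-1} n^{n-1} \alpha^{n-1} \cdot n = (n-1)^{n-1}\beta^{n-1}\cdot\beta$, i.e. $(-1)^{n-1} n^n \alpha^{n-1} = (n-1)^{n-1}\beta^n$, which is exactly the displayed equation in part (iii) once one accounts for the sign (and $\beta = 1-\alpha$ so $\beta^n = (-1)^n(\alpha-1)^n$, giving the alternate form).

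Next I would observe that this also handles parity. When $n$ is even, $n-1$ is odd, so $(-1)^{n-1} = -1$ and the condition becomes $-n^n\alpha^{n-1} = (n-1)^{n-1}\beta^n$; but for $\alpha, \beta \in [0,1]$ the left side is $\leq 0$ while the right side is $\geq 0$, and they are simultaneously zero only at $\alpha = 0$ (where we already checked distinctness) — hence no multiple root, proving (i). When $n$ is odd, $(-1)^{n-1} = 1$, and the condition is $n^n\alpha^{n-1} = (n-1)^{n-1}\beta^n$, both sides nonnegative. When $\alpha \geq \beta$ (so $\alpha \geq 1/2 \geq \beta$), I would argue that the left side strictly dominates: $n^n\alpha^{n-1} > (n-1)^{n-1}\beta^n$ because $n^n > (n-1)^{n-1}$ (crudely, $n^n \geq n \cdot n^{n-1} > (n-1)^{n-1}$ for $n \geq 2$) and $\alpha^{n-1} \geq \beta^{n-1} \geq \beta^n$ since $\beta \leq 1$ and $\alpha \geq \beta$. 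Hence no multiple root, proving (ii). Part (iii) is then just the contrapositive restatement: for $n$ odd with $\alpha < \beta$, the only possible multiple root is $r = -n\alpha/((n-1)\beta)$, and it actually is one exactly when the displayed polynomial identity holds.

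A subtle point I want to be careful about is multiplicity higher than two: I should check that $f_\alpha''(r) \neq 0$ at the candidate root, or at least that I am correctly characterizing "has a multiple root" rather than "has a triple root." Since $f_\alpha''(t) = n(n-1)t^{n-2}$, which vanishes only at $t = 0$, and $r = -n\alpha/((n-1)\beta) \neq 0$ for $\alpha \in (0,1)$, any multiple root is exactly a double root; this is a clean consistency check but does not affect the statement. The other routine verification is the algebraic manipulation turning $(-1)^{n-1}n^n\alpha^{n-1} = (n-1)^{n-1}\beta^n$ into the two forms displayed in (iii), using $n$ odd and $\beta = 1-\alpha$.

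I expect the main obstacle to be purely bookkeeping: tracking the sign $(-1)^{n-1}$ correctly through the substitution and matching it against the asserted equation, together with cleanly dispatching the boundary cases $\alpha \in \{0,1\}$ so that the "if and only if" in (iii) is literally correct (the displayed equation should fail at $\alpha = 0$, and indeed it gives $-(n-1)^{n-1} = 0$, false, consistent with $f_0$ having distinct roots). There is no deep difficulty — everything reduces to the single observation $r^{n-1} = \beta/n$ at a multiple root — but the inequality estimates in (i) and (ii) should be stated with enough care that $n \geq 4$ (or even $n \geq 2$) suffices.
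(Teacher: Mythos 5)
Your proof is correct, and it takes a genuinely different route from the paper's. Both arguments start from the same observation --- a multiple root is a common root of $f_\alpha$ and $f_\alpha'(t)=nt^{n-1}-\beta$ --- and both derive the candidate $r=-\frac{n\alpha}{(n-1)\beta}$. From there, however, you substitute this $r$ back into $r^{n-1}=\beta/n$ to obtain the single explicit criterion $(-1)^{n-1}n^n\alpha^{n-1}=(n-1)^{n-1}\beta^n$ (valid once $\beta\neq 0$, with $\alpha\in\{0,1\}$ checked by hand), and then all three parts follow by parity/sign inspection plus the elementary estimate $n^n\alpha^{n-1}>(n-1)^{n-1}\beta^n$ when $\alpha\geq\beta$. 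The paper instead handles the three parts with three different tools: part (i) by Descartes' rule of signs applied to $f_\alpha(-t)$ (a multiple root would have to be a repeated negative root, but there is at most one negative root), part (ii) by primitivity of the companion matrix and Perron--Frobenius, which forces every root other than $1$ to satisfy $|\lambda|<1$, contradicting $|\lambda|=\alpha n/(\beta(n-1))\geq n/(n-1)>1$, and part (iii) by computing the resultant $R(f_\alpha,f_\alpha')$ from the Sylvester matrix via a block (Schur-complement-type) reduction. Your route is more unified and elementary: the same polynomial condition $n^n\alpha^{n-1}-(n-1)^{n-1}\beta^n=0$ drops out by direct substitution with no resultant computation, at the modest cost of tracking the sign $(-1)^{n-1}$ and the boundary cases, which you do. One tiny wording slip in your part (i): the two sides of $-n^n\alpha^{n-1}=(n-1)^{n-1}\beta^n$ are in fact \emph{never} simultaneously zero (at $\alpha=0$ the right side is $(n-1)^{n-1}\neq 0$), since $\alpha$ and $\beta$ cannot both vanish; this only strengthens your conclusion, and your separate verification that $t^n-t$ has distinct roots covers that case regardless. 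Your remark that any multiple root is exactly double (since $f_\alpha''$ vanishes only at $0\neq r$) is not needed for the statement but matches the paper's later Remark on the multiplicity being two.
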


\begin{proof} Notice that $f_\alpha(1) = 0$, and, since $C_f$ is primitive, if $f_\alpha(\lambda) = 0$, $\lambda \neq 1$, then 
\begin{equation}
|\lambda| < 1. \label{dominate}
\end{equation} 

It is well-known that a polynomial has a multiple root if and only if it shares a root with its formal derivative. Thus, $f_\alpha$ has a multiple root $\lambda \in \bb{C}$ if and only if $f_\alpha (\lambda) = f_\alpha'(\lambda) = 0$, i.e., if and only if  
\begin{align}
\lambda^n - \beta \lambda - \alpha &= 0 	\label{poly}	\\
n\lambda^{n-1} - \beta &= 0			\label{deriv}.
\end{align}
Solving for $\beta$ in \eqref{deriv} and substituting the result in \eqref{poly} yields
\begin{equation}
\lambda^n = -\frac{\alpha}{n-1}	.		\label{lambdan}
\end{equation}
Substituting for $\lambda^n$ in \eqref{poly} yields
\begin{equation}
\lambda = -\frac{\alpha n}{\beta(n-1)} < 0. 	\label{lambdafrac}
\end{equation} 

We now consider each part separately:

\begin{enumerate}
[label=(\roman*)]
\item For contradiction, if $f_\alpha$ has a multiple root, then it must be negative \eqref{lambdafrac}; however, because $f_\alpha(-t) = t^n + \beta t - \alpha$, Descartes' Rule of Signs ensures that $p$ has at most one negative root, a contradiction. 
 
\item Suppose that $n$ is odd and $\alpha \geq \beta$. For contradiction, if $f_\alpha$ has a multiple root, then, following \eqref{lambdafrac},  
\begin{equation*}
|\lambda| = \frac{\alpha n}{\beta(n-1)} \geq \frac{n}{n-1} > 1,
\end{equation*}
contradicting \eqref{dominate}.

\item It is well-known that a polynomial $f$ has a multiple root if and only if its \emph{resultant} $R(f,f')$ vanishes. If 
\[ 
S(f_\alpha,f_\alpha') =
\kbordermatrix{
   & 1 & \cdots & n-1 & & n & n+1 & & 2n-1 		\\
1 & 1                         & & & \vrule & -\beta & -\alpha 	\\
\vdots & & \ddots & & \vrule & & \ddots & \ddots 	\\
n-1 & & & 1 & \vrule & & & -\beta & -\alpha 		\\
\cline{2-9}
n & n & & & \vrule & -\beta 				 \\
\vdots & & \ddots & & \vrule & & \ddots 		\\
2n & & & n & \vrule & & & -\beta \\
2n-1 & 0  & \cdots & 0 & \vrule & n & & & -\beta
}, \] 
then $R(f_\alpha,f_\alpha') = | S(f_\alpha,f_\alpha')| = | D - C B|$, where $B$, $C$, and $D$ denote the upper-right, lower-left, and lower-right blocks of $S(f_\alpha,f_\alpha')$. Since 
\[ D - CB =
\begin{bmatrix}
(n-1) \beta & n\alpha & &  \\
& \ddots & \ddots & \\
& & (n-1) \beta & n\alpha \\
n & & & -\beta
\end{bmatrix}, \]
and $n$ is odd, it follows that $R(f_\alpha,f_\alpha')  = n^n \alpha^{n-1} - (n-1)^{n-1} \beta^n$ and the result is established.
\qedhere
\end{enumerate}
\end{proof}

%-----------------------------------------------------------------------------------------------------------------------------------------------------------------------------------------------------------------------------
\begin{rem}\label{rem:negmult}
If $n$ is odd and $f_\alpha$ has a multiple root $\lambda$ (which, folllowing \eqref{lambdafrac}, must be negative), then Descartes' Rule of Signs applied to $f_\alpha (-t) = -t^n +\beta t - \alpha$ forces the multiplicity of $\lambda$ as a root of $f_\alpha$ to be exactly two. 
\end{rem}

%-----------------------------------------------------------------------------------------------------------------------------------------------------------------------------------------------------------------------------
\begin{rem}
\label{polypi}
Under the hypotheses of part (iii) of \hyp{Proposition}{distincteigs}, the resultant $R(f_\alpha,f'_\alpha) = \pi(\alpha) = n^n \alpha^{n-1} + (n-1)^{n-1} (\alpha -1)^n$ for the polynomial $f_\alpha$ defined in \eqref{polyalpha}, is a univariate polynomial in $\alpha$. Since $\pi(0) = -(n-1)^{n-1} < 0$ and $\pi(1) = n^n > 0$, it folllows that $\pi$ must have a root in $(0,1)$. However, $\pi'(\alpha) = n^n (n-1) \alpha^{n-2} + (n-1)^{n-1} (\alpha -1)^{n-1}$ and because $n$ is odd, we have $\pi(\alpha) \geq 0$ for all $\alpha \geq 0$. Thus, $\pi$ is strictly increasing on $(0,\infty)$ and hence has exactly one root in $(0,1)$. 
\end{rem}
 
%-----------------------------------------------------------------------------------------------------------------------------------------------------------------------------------------------------------------------------
\begin{cor} \label{cor:diffarcs}
Let $n \geq 4$ be a positive integer.
\begin{enumerate}
[label=(\roman*)]
\item If $n$ is even and $\floor{n/2} \leq m \leq n$, then the K-arc $K_n \left( {1}/{m},{1}/{m-1} \right)$ is differentiable. 
\item If $n$ is odd and $\floor{n/2}+1 \leq m \leq n$, then the K-arc $K_n \left( {1}/{m},{1}/{m-1} \right)$ is differentiable. 
\end{enumerate}
\end{cor}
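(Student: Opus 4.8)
The plan is to identify each arc $K_n(1/m,1/(m-1))$ in the statement with a branch of the root locus of the polynomial $t^{m}-\beta t-\alpha$ --- which is exactly the polynomial \eqref{polyalpha} of Proposition~\ref{distincteigs} with $n$ replaced by $m$ --- and then to apply the differentiability criterion established just before that proposition. For $m$ not too small (the finitely many exceptional small values are discussed in the last paragraph), $1/m$ and $1/(m-1)$ are Farey neighbors of order $n$ with $\lfloor n/(m-1)\rfloor=1$, so $K_n(1/m,1/(m-1))$ is a Type~I arc in the sense of the proof of Theorem~\ref{thm:main}; dividing the Ito equation \eqref{ito_eq} by $t^{m-1}$ shows that its points satisfy
\[
f_\alpha(t)=t^{m}-\beta t-\alpha=(1-\alpha)(t^{m}-t)+\alpha(t^{m}-1)=0,\qquad \alpha\in[0,1].
\]
In the notation preceding Proposition~\ref{distincteigs}, taking $g:=t^{m}-t$, $f:=t^{m}-1$ and $c_\alpha=\alpha f+(1-\alpha)g=f_\alpha$, the arc is the path $P\big(e^{2\pi\ii/(m-1)},e^{2\pi\ii/m}\big)$ joining the root $e^{2\pi\ii/(m-1)}$ of $g$ (attained at $\alpha=0$) to the root $e^{2\pi\ii/m}$ of $f$ (attained at $\alpha=1$).

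By the computation preceding Proposition~\ref{distincteigs}, this path is differentiable at an interior point $r=r(\alpha)$, $\alpha\in(0,1)$, as soon as $r$ is a simple root of $f_\alpha$; hence it suffices to show the arc never passes through a multiple root of $f_\alpha$. I would split on the parity of $m$ (note $m\geq 4$ throughout the Type~I range, so Proposition~\ref{distincteigs} applies with $m$ in place of $n$). If $m$ is even, Proposition~\ref{distincteigs}(i) gives that $f_\alpha$ has $m$ distinct roots for every $\alpha$, and there is nothing further to check. If $m$ is odd, Proposition~\ref{distincteigs}(ii)--(iii) together with Remarks~\ref{polypi} and~\ref{rem:negmult} show that $f_\alpha$ has a multiple root for exactly one parameter value $\alpha^{\ast}\in(0,\tfrac12)$, and that this multiple root is a \emph{negative real} number (of multiplicity two).

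It then remains to check that this negative real number is not a point of the arc. This is where the geometry supplied by Theorem~\ref{thm:karpito} is used: $K_n(1/m,1/(m-1))$ is the portion of $\partial\Theta_n$ strictly between the roots of unity $e^{2\pi\ii/m}$ and $e^{2\pi\ii/(m-1)}$, which are non-real since $m\geq 4$, so every point $\lambda$ of the arc satisfies $0<\arg\lambda<\pi$, whereas the multiple root has argument $\pi$. Hence $r(\alpha^{\ast})$ is again a simple root of $f_{\alpha^{\ast}}$, the criterion applies at every interior point of the arc, and at the two endpoints $e^{2\pi\ii/(m-1)}$ is a simple root of $g$ and $e^{2\pi\ii/m}$ a simple root of $f$, so the formula $r'=\big(g(r)-f(r)\big)/c_\alpha'(r)$ persists there and delivers one-sided derivatives. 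This establishes differentiability of the arc.

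The step I expect to be the genuine obstacle is the accounting at the low end of the range of $m$: pinning down exactly which $m$ give a bona fide Type~I arc (so that Proposition~\ref{distincteigs} applies verbatim after dividing out $t^{m-1}$), and disposing of the at most one remaining value of $m$ --- namely $m=\lfloor n/2\rfloor+1$, which occurs when $n$ is even --- at which the arc is instead of Type~II and its defining polynomial is not literally $t^{m}-\beta t-\alpha$. For that value one reduces the corresponding Ito polynomial, rules out interior multiple roots via its discriminant, and --- because the arc then emanates at $\alpha=0$ from a \emph{double} root of the limiting polynomial --- supplements with a Puiseux estimate showing the branch leaves that endpoint linearly in $\alpha$ and is therefore differentiable there as well. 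A secondary point needing care is the assertion that the arc lies in the open upper half-plane, which again rests on the structure of the \karp~region given in Theorem~\ref{thm:karpito}.
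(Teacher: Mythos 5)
Your main argument is correct and is essentially the paper's proof: the paper likewise observes that these arcs are governed by the degree-$m$ polynomial \eqref{polyalpha}, invokes Proposition~\ref{distincteigs} to rule out multiple roots except when the degree is odd and $\alpha<\beta$, and then uses Remark~\ref{rem:negmult} (any multiple root is negative real, hence off the arc) together with the differentiability criterion derived before Proposition~\ref{distincteigs}; your explicit appeal to Theorem~\ref{thm:karpito} to place the arc in the open upper half-plane, and your treatment of one-sided derivatives at the endpoints, merely spell out what the paper leaves implicit. The one place you diverge is the low-end bookkeeping you flag as the ``genuine obstacle,'' and there you are in fact more careful than the source: the paper's proof tacitly treats every arc in the stated range as a Type~I arc with reduced polynomial $t^m-\beta t-\alpha$, and says nothing about the facts that $m=\floor{n/2}$ ($n$ even) and $m=\floor{n/2}+1$ ($n$ odd) do not give Farey pairs at all, nor that for $n$ even the arc at $m=\floor{n/2}+1$ is of Type~II (reduced polynomial $(t^{n/2}-\beta)^2-\alpha^2t^{n/2-1}$), to which Proposition~\ref{distincteigs} does not apply. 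So your discriminant/Puiseux sketch for that Type~II arc, while not carried out, addresses a case the paper's own proof simply does not cover; on the cases the paper does prove, your route and its are the same.
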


\begin{proof}
In view of \hyp{Proposition}{distincteigs}, it suffices to consider the case when $n$ is odd and $\alpha < \beta$, where $f_\alpha$ is defined as in \eqref{polyalpha}; however, this case is clear as well since \hyp{Remark}{rem:negmult} ensures that if $f_\alpha$ has a multiple multiple root, then $\lambda$ is real.   
\end{proof}

%-----------------------------------------------------------------------------------------------------------------------------------------------------------------------------------------------------------------------------
\section{Powers of Realizing-matrices}
%-----------------------------------------------------------------------------------------------------------------------------------------------------------------------------------------------------------------------------

For each of the arc types listed in the proof of \hyp{Theorem}{thm:main}, we refer to the collection of polynomials 
\begin{align}
f_\alpha(t) &= (t  - \beta)^n - \alpha^n 								\tag{Type 0}		\\
f_\alpha(t) &= t^{s}  - \beta  t^{s-q} - \alpha 							\tag{Type I}		\\
f_\alpha(t) &= (t^q  - \beta)^{\floor{n/q}} - \alpha^{\floor{n/q}} t^{q\floor{n/q} - s}	\tag{Type II} 	\\
f_\alpha(t) &= t^{s - q\floor{n/q}} (t^q  - \beta)^{\floor{n/q}} - \alpha^{\floor{n/q}} 			\tag{Type III}	
\end{align}
as the \emph{reduced Ito polynomials}. 

The following result is readily deduced  from several well-known theorems concerning Farey pairs (see, e.g., \cite[pp. 28--29]{hw2008}).

%-----------------------------------------------------------------------------------------------------------------------------------------------------------------------------------------------------------------------------
\begin{lem} 
\label{lem:farey_pair}
If $p/q$, $r/s$ are elements of ${F}_n$, then $(p/q,r/s)$ is a Farey pair of order $n$  if and only if $qr- ps= 1$ and $q + s > n$. 
\end{lem}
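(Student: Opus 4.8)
The plan is to prove the two implications separately, leaning on the classical structure theory of Farey fractions referenced in \cite{hw2008}. The one tool I will use repeatedly is the elementary denominator estimate: if $a/b < c/d$ are fractions in lowest terms with $cb - ad = 1$, and $x = e/g$ is any fraction in lowest terms with $a/b < x < c/d$, then $g \geq b + d$. This follows because $x - a/b = (eb - ag)/(gb) \geq 1/(gb)$ and $c/d - x = (cg - de)/(gd) \geq 1/(gd)$ (each numerator being a positive integer), so summing gives $1/(bd) = (cb - ad)/(bd) \geq (b+d)/(gbd)$, whence $g \geq b + d$.

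For the implication ``$\Leftarrow$'', assume $qr - ps = 1$ and $q + s > n$. Then $r/s - p/q = (qr - ps)/(qs) = 1/(qs) > 0$, so $p/q < r/s$, as the definition of a Farey pair requires. If some $x \in F_n$ satisfied $p/q < x < r/s$, then, writing $x$ in lowest terms as $e/g$ with $g \leq n$, the estimate above (applied with $cb - ad = qr - ps = 1$) would give $g \geq q + s > n$, a contradiction. Hence no element of $F_n$ lies strictly between $p/q$ and $r/s$, i.e., $(p/q, r/s)$ is a Farey pair of order $n$.

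For the implication ``$\Rightarrow$'', assume $(p/q, r/s)$ is a Farey pair of order $n$; in particular $p/q < r/s$ and no element of $F_n$ lies strictly between them, so $p/q$ and $r/s$ are consecutive terms of $F_n$ in increasing order. The classical theorem on successive Farey fractions \cite{hw2008} then gives $qr - ps = 1$, and the companion result on the sum of the denominators of consecutive terms gives $q + s > n$. (The second fact also follows directly: the mediant $(p+r)/(q+s)$ lies strictly between $p/q$ and $r/s$, so if $q + s \leq n$ then reducing it to lowest terms produces an element of $F_n$ strictly between $p/q$ and $r/s$, contradicting the Farey-pair hypothesis.)

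I expect the only genuine obstacle to be the forward identity $qr - ps = 1$, which is not trivial from scratch: it is proved via a greedy, B\'ezout-type construction of the successor of a given Farey fraction of order $n$. Since that is exactly the content of a standard theorem, I would invoke it rather than reproduce the argument; everything else reduces to the denominator estimate and the mediant inequality, both of which are routine.
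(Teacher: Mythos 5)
Your proof is correct and follows essentially the same route as the paper, which simply cites the classical facts about consecutive Farey fractions in \cite{hw2008}; the backward direction via the denominator estimate and the forward direction via unimodularity of consecutive terms plus the mediant argument are exactly the ``well-known theorems'' the paper has in mind. You in fact supply more detail than the paper, which gives no proof beyond the citation.
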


%-----------------------------------------------------------------------------------------------------------------------------------------------------------------------------------------------------------------------------
\begin{lem}
\label{lem:divisor}
If $d$ is a positive integer such that $1 < d < n$, then $(d/n,d/n-1)$ is a Farey pair of order $n$ if and only if $d$ divides $n$ or $d$ divides $n-1$. 
\end{lem}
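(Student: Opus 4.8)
The idea is to put each of $d/n$ and $d/(n-1)$ in lowest terms and then read off the answer from \hyp{Lemma}{lem:farey_pair}. Set $g_1 := \gcd(d,n)$ and $g_2 := \gcd(d,n-1)$. Any common divisor of $g_1$ and $g_2$ divides both $n$ and $n-1$, hence equals $1$; thus $\gcd(g_1,g_2) = 1$, and consequently $g_1 g_2 \mid d$. The reduced forms are $d/n = (d/g_1)/(n/g_1)$ and $d/(n-1) = (d/g_2)/((n-1)/g_2)$; both denominators are at most $n$, and $d/n < d/(n-1)$, so in the notation of \hyp{Lemma}{lem:farey_pair} we may take $p = d/g_1$, $q = n/g_1$, $r = d/g_2$, $s = (n-1)/g_2$. (The only degenerate case is $d = n-1$: then $d/(n-1) = 1$, whose reduced form $1/1$ represents the point $1$ on the unit circle, and the formulas below still apply, now with $s = 1$.)

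A short computation gives $qr - ps = (nd - d(n-1))/(g_1 g_2) = d/(g_1 g_2)$, a positive integer since $g_1 g_2 \mid d$, together with $q + s = n/g_1 + (n-1)/g_2$. Hence, by \hyp{Lemma}{lem:farey_pair}, $(d/n,\, d/(n-1))$ is a Farey pair of order $n$ if and only if $d = g_1 g_2$ and $n/g_1 + (n-1)/g_2 > n$; it then remains only to identify this conjunction with ``$d \mid n$ or $d \mid n-1$''.

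For the ``only if'' direction: if $d \mid n$, then $g_1 = d$ and $g_2 = \gcd(d,n-1) \mid \gcd(n,n-1) = 1$, so $g_1 g_2 = d$ and $n/g_1 + (n-1)/g_2 = n/d + (n-1) \geq 2 + (n-1) = n+1 > n$, using $n/d \geq 2$ since $d \mid n$ and $d < n$; the case $d \mid n-1$ is symmetric, giving $g_1 = 1$, $g_2 = d$, and $n + (n-1)/d \geq n+1 > n$. Conversely, if the pair is Farey, then $d = g_1 g_2$ and $n/g_1 + (n-1)/g_2 > n$; were both $g_1 \geq 2$ and $g_2 \geq 2$, we would obtain $n/g_1 + (n-1)/g_2 \leq n/2 + (n-1)/2 = n - 1/2 < n$, a contradiction, so $g_1 = 1$ (forcing $d = g_2 \mid n-1$) or $g_2 = 1$ (forcing $d = g_1 \mid n$). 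The two alternatives are mutually exclusive, since $d \mid n$ and $d \mid n-1$ would force $d \mid 1$.

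I expect the converse to carry all the content. The determinant identity $qr - ps = 1$ on its own yields only $\gcd(d,n)\gcd(d,n-1) = d$, which is strictly weaker than ``$d \mid n$ or $d \mid n-1$'' (for instance $d = 6$, $n = 10$ has $\gcd(6,10)\gcd(6,9) = 6$ but $6 \nmid 10$ and $6 \nmid 9$). It is the second Farey-pair condition $q + s > n$, used together with the hypothesis $d < n$, that excludes such cases, through the one-line estimate $n/g_1 + (n-1)/g_2 \leq (2n-1)/2 < n$, valid as soon as $g_1, g_2 \geq 2$. Apart from keeping track of the passage to lowest terms, I foresee no genuine obstacle.
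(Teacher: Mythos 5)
Your proof is correct, and it follows the same overall route as the paper (reduce both fractions to lowest terms and invoke \hyp{Lemma}{lem:farey_pair}); the forward direction, in particular, is essentially identical to the paper's. Where you genuinely diverge is the converse. The paper disposes of it in one line by computing the determinant of the \emph{unreduced} pair, $dn - d(n-1) = d \neq 1$, and citing \hyp{Lemma}{lem:farey_pair}; but that lemma applies to elements of $F_n$, i.e.\ to the reduced forms, whose determinant is $d/(g_1 g_2)$ with $g_1 = \gcd(d,n)$, $g_2 = \gcd(d,n-1)$. As your example $d=6$, $n=10$ shows (reduced pair $3/5$, $2/3$ with determinant $1$), the determinant condition alone does not exclude the case $g_1, g_2 > 1$ with $g_1 g_2 = d$; it is the second Farey condition $q + s > n$, via your estimate $n/g_1 + (n-1)/g_2 \leq n - 1/2 < n$ when $g_1, g_2 \geq 2$, that rules it out. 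So your argument is more careful than the paper's and in fact repairs a gap in its converse; the only loose end on both sides is the degenerate endpoint $d = n-1$, where the reduced fraction $1/1$ falls outside $F_n$ as literally defined and must be read as the vertex $1$ in the circular order, which you at least flag explicitly.
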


\begin{proof}
If there is a positive integer $k$ such that $n = dk$, then $(d/n,d/n-1) = (1/k,d/n-1)$. Since $dk - (n-1) = 1 $, it follows that $d/n-1 \in {F}_n$.  Because $k > 1$, it follows that $k + n - 1 >  n$. Following \hyp{Lemma}{lem:farey_pair}, $(1/k,d/n-1)$ is a Farey pair. A similar argument demonstrates that $(d/n,1/k)$ is a Farey pair if $d$ divides $n-1$.

Conversely, if $d$ does not a divisor of either $n$ or $n-1$, then $dn - d(n-1) = d \neq 1$. The result now follows from \hyp{Lemma}{lem:farey_pair}.   
\end{proof}

%-----------------------------------------------------------------------------------------------------------------------------------------------------------------------------------------------------------------------------
\begin{cor}
\label{cor:divisor}
Let $d$, $m$, and $n$ be positive integers such that $d < m \leq n$, and suppose that $(1/m,1/m-1)$ is a Farey pair of order $n$. 
\begin{enumerate}[label=(\roman*)]
\item If $d$ divides $m$ and $k:=m/d$, then $(1/k, d/m-1)$ is a Farey pair of order $n$ if and only if $k + m - 1 > n$.   
\item If $d$ divides $m-1$ and $k:=(m-1)/d$, then $(d/m,1/k)$ is a Farey pair of order $n$ if and only if $m + k > n$.
\end{enumerate}
\end{cor}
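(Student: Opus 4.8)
The plan is to reduce both parts to \hyp{Lemma}{lem:farey_pair} and to exploit the fact that the divisibility hypothesis forces the ``unimodularity'' relation $qr-ps=1$ to hold automatically, so that the Farey-pair test collapses to the single inequality $q+s>n$.

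For part (i) I would put $k:=m/d$, so that $kd=m$ and, since $d<m$, $k\geq 2$. First I would check that the two candidate fractions lie in ${F}_n$: indeed $1/k\in{F}_n$ because $k\leq m\leq n$, and $d/(m-1)\in{F}_n$ because $m-1\leq n$ and any common divisor of $d$ and $m-1$ must divide $\gcd(m,m-1)=1$ (using $d\mid m$). One also has $1/k<d/(m-1)$, since $m-1<kd=m$. Taking $(p/q,r/s)=(1/k,d/(m-1))$, a one-line computation gives $qr-ps=kd-(m-1)=m-(m-1)=1$, so by \hyp{Lemma}{lem:farey_pair} the pair $(1/k,d/(m-1))$ is a Farey pair of order $n$ if and only if $q+s=k+(m-1)>n$, which is the asserted equivalence.

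Part (ii) is completely parallel. Set $k:=(m-1)/d$, so that $kd=m-1$. Here $d/m\in{F}_n$ because $d<m\leq n$ and any common divisor of $d$ and $m$ divides $\gcd(m-1,m)=1$ (using $d\mid m-1$), while $1/k\in{F}_n$ because $k\leq m-1<n$; moreover $d/m<1/k$ since $kd=m-1<m$. With $(p/q,r/s)=(d/m,1/k)$ one finds $qr-ps=m-dk=m-(m-1)=1$, and \hyp{Lemma}{lem:farey_pair} then gives that $(d/m,1/k)$ is a Farey pair of order $n$ if and only if $q+s=m+k>n$.

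There is no genuine obstacle in this argument; the only items needing attention are the membership-and-ordering checks that license \hyp{Lemma}{lem:farey_pair}, together with the degenerate case $k=1$ in part (ii) (that is, $d=m-1$), in which $1/k$ must be read as the terminal root of unity $e^{2\pi\ii}=1$. In that case $(d/m,1/k)=((m-1)/m,1)$ is a pair of consecutive roots of unity of order at most $n$ precisely when $m=n$, and this matches the inequality $m+k=m+1>n$, so the stated equivalence still holds verbatim.
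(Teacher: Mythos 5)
Your argument is correct and is exactly the route the paper intends: the paper states this as a corollary of \hyp{Lemma}{lem:farey_pair} with the proof omitted, and your verification that the divisibility hypothesis forces the cross-product $qr-ps=1$ (so the Farey-pair test reduces to $q+s>n$) mirrors the proof of \hyp{Lemma}{lem:divisor}. Your extra care with membership in $F_n$ and with the degenerate case $k=1$ in part (ii) is sound and, if anything, slightly more thorough than the paper.
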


%%-----------------------------------------------------------------------------------------------------------------------------------------------------------------------------------------------------------------------------
%\begin{mydef}[Extremal values {\cite{dd1946}}]
%A complex number $\lambda \in \Theta_n$ is called \emph{extremal} if $r\lambda \not \in \Theta_n$ for all $r >1$.
%\end{mydef}
%
%%-----------------------------------------------------------------------------------------------------------------------------------------------------------------------------------------------------------------------------
%\begin{mydef}
%For $z = r \exp{(\ii \theta)}$, $\theta \in [0,2\pi)$, and $d  > 1$, let $z^{1/d} := r^{1/d} \exp{(\ii \theta/d)}$, i.e., $z^{1/d}$ is the \emph{principal d\textsuperscript{th} root} of $z$.
%\end{mydef}

%-----------------------------------------------------------------------------------------------------------------------------------------------------------------------------------------------------------------------------
\begin{thm} \label{thm:arcpowers}
Let $d$, $m$, and $n$ be positive integers such that $1< d < m \leq n$. Suppose that $(1/m,1/m-1)$ and $(d/m,d/m-1)$ are Farey pairs of order $n$. 
We distinguish the following cases:
\begin{enumerate}[label=(\roman*)]
\item d divides m: For $f_\alpha(t) = t^m - \beta t - \alpha$, let $M(\alpha)$ be defined as in \eqref{typeonemats}. If $\mathcal{M} := \{ M(\alpha) : \alpha \in [0,1] \}$, then $\mathcal{M}^d := \{ M(\alpha)^d : \alpha \in [0,1]\}$ forms a set of realizing-matrices for $K_n(1/k,d/m-1)$, where $k = m/d$.  

\item d divides $m-1$ and $m > k\floor{n/k}$, where $k = m/d$: For $f_\alpha(t) = t^m - \beta t - \alpha$, let $M(\alpha)$ be defined as in \eqref{typeonemats}. If $\mathcal{M} := \{ M(\alpha) : \alpha \in [0,1] \}$, then $\mathcal{M}^d := \{ M(\alpha)^d : \alpha \in [0,1]\}$ forms a set of realizing-matrices for $K_n(d/m,1/k)$, where $k = m/d$.
\end{enumerate}
\end{thm}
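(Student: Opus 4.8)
The plan is to reduce the whole statement to a single polynomial identity and then quote \hyp{Theorem}{thm:karpito}. First I would record that the matrix $M(\alpha)$ of \eqref{typeonemats} attached to $f_\alpha(t) = t^m - \beta t - \alpha$ is nothing but the companion matrix of $f_\alpha$, so that $\chi_{M(\alpha)} = f_\alpha$ and $M(\alpha)$ is stochastic; consequently $M(\alpha)^d$ is stochastic and $\sig{M(\alpha)^d} = \{\lambda^d : f_\alpha(\lambda) = 0\}$. The goal then splits into (a) naming the target arc's type and its reduced Ito polynomial $g_\alpha$, and (b) proving $\chi_{M(\alpha)^d} = g_{1-\alpha}$ (case (i)) or $\chi_{M(\alpha)^d} = g_\alpha$ (case (ii)); granting both, \hyp{Theorem}{thm:karpito} exhibits every point of the target K-arc as a root of $g_{\alpha_0}$ for some $\alpha_0 \in [0,1]$, hence as an eigenvalue of the stochastic matrix $M(1-\alpha_0)^d$ (resp. $M(\alpha_0)^d$), which lies in $\mathcal{M}^d$ --- exactly what ``set of realizing matrices'' requires.

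For (a) I would argue as follows. In case (i), $d \mid m$; put $k := m/d$, so $k \leq m/2 < m-1$ and the endpoints $e^{2\pi\ii/k}$, $e^{2\pi\ii d/(m-1)}$ have denominators $k < m-1$. The hypothesis that $(d/m,d/(m-1)) = (1/k,d/(m-1))$ is a Farey pair of order $n$ gives, via \hyp{Lemma}{lem:farey_pair}, the inequality $k + (m-1) > n$; combined with $m \leq n$ this sandwiches $kd = m \leq n < k(d+1)$, so $\floor{n/k} = d$. Since $d > 1$ and $m - 1 < m = k\floor{n/k}$, the arc $K_n(1/k,d/(m-1))$ is of Type II with $q = k$, $\floor{n/q} = d$ and $q\floor{n/q} - s = 1$, hence $g_\alpha(t) = (t^k - \beta)^d - \alpha^d t$. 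In case (ii), $d \mid m-1$; put $k := (m-1)/d < m$, and the hypothesis makes $(d/m,d/(m-1)) = (d/m,1/k)$ a Farey pair of order $n$. Here $n \geq m > kd$ forces $\floor{n/k} \geq d$, while the extra hypothesis $m > k\floor{n/k}$ together with $m = kd+1$ forces $\floor{n/k} < d+1$; so again $\floor{n/k} = d$. Since $d > 1$ and $m > m-1 = k\floor{n/k}$, the arc $K_n(d/m,1/k)$ is of Type III with $q = k$, $\floor{n/q} = d$ and $s - q\floor{n/q} = 1$, hence $g_\alpha(t) = t(t^k - \beta)^d - \alpha^d$.

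For (b) I would compute $\chi_{M(\alpha)^d}$ directly. Let $\omega$ be a primitive $d$-th root of unity and $u$ a $d$-th root of $t$. Factoring $\chi_{M(\alpha)^d}$ through the eigenvalues of $M(\alpha)$ --- equivalently, $\chi_{M(\alpha)^d}(t) = (-1)^m \operatorname{Res}_x\!\bigl(f_\alpha(x),\, x^d - t\bigr)$ --- writes $\chi_{M(\alpha)^d}(t)$ as a nonzero scalar multiple of $\prod_{j=0}^{d-1} f_\alpha(\omega^j u)$. I would then use the divisibility to collapse $u^m$: when $d \mid m$ one has $u^m = t^k$ and $\omega^{jm} = 1$, so $f_\alpha(\omega^j u) = (t^k - \alpha) - \beta\omega^j u$ and $\prod_{j} f_\alpha(\omega^j u) = (t^k - \alpha)^d - \beta^d t$; when $d \mid m-1$ one has $u^m = u\,t^k$ and $\omega^{jm} = \omega^j$, so $f_\alpha(\omega^j u) = \omega^j u(t^k - \beta) - \alpha$ and $\prod_j f_\alpha(\omega^j u)$ is a scalar multiple of $t(t^k - \beta)^d - \alpha^d$. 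In either case $\chi_{M(\alpha)^d}$ is thus a nonzero scalar multiple of a monic polynomial of degree $m$; since $\chi_{M(\alpha)^d}$ is itself monic of degree $m$, matching leading coefficients forces $\chi_{M(\alpha)^d}(t) = (t^k - \alpha)^d - \beta^d t = g_{1-\alpha}(t)$ in case (i) and $\chi_{M(\alpha)^d}(t) = t(t^k - \beta)^d - \alpha^d = g_\alpha(t)$ in case (ii), establishing (b).

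The routine parts are the resultant/eigenvalue manipulation in (b) and the Farey-fraction arithmetic in (a). The steps I expect to be the main obstacle --- really, the only delicate ones --- are pinning down $\floor{n/q} = d$ so as to identify the target arc's type (this is where the side hypothesis $m > k\floor{n/k}$ in case (ii) earns its keep: without it $\floor{n/k}$ could exceed $d$ and the target would not be the Type III arc claimed), and keeping straight the reparametrization $\alpha \leftrightarrow \beta$ that surfaces when $d \mid m$ but is absent when $d \mid m-1$.
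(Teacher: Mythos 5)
Your proposal is correct, and the arc-identification half (pinning down $\floor{n/k}=d$ via \hyp{Lemma}{lem:farey_pair} and reading off the Type II/Type III reduced Ito polynomials, including the $\alpha\leftrightarrow\beta$ swap in case (i)) matches the paper's, which routes the same arithmetic through \hyp{Corollary}{cor:divisor}. Where you genuinely diverge is the central step, the identity $\chi_{M^d}=q_\alpha$ (your $g$): the paper never computes $\chi_{M^d}$ directly. Instead it uses Cayley--Hamilton to get $M^m-\beta I=\alpha M$ (resp.\ $M^m-\beta M=\alpha I$), deduces that the target reduced Ito polynomial \emph{annihilates} $M^d$, and then argues that $M^d$ is nonderogatory (companion matrices are nonderogatory, and powering preserves this here because $0$ is not a repeated eigenvalue), so the minimal polynomial of $M^d$ has degree $m$ and must coincide with the monic annihilator $q_\alpha$, hence with $\chi_{M^d}$. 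You instead compute $\chi_{M(\alpha)^d}(t)=\prod_{j=0}^{d-1}f_\alpha(\omega^j u)$ (a resultant in disguise), collapse $u^m$ using $d\mid m$ or $d\mid m-1$, and finish by matching monic leading terms; this is more computational but self-contained, avoids the minimal-polynomial/Jordan-structure detour (and the appeal to the functional-calculus fact about powers of nonderogatory matrices), and makes the reparametrization in case (i) explicit. The paper's route buys brevity in the algebra at the cost of that nonderogatory lemma; yours buys an explicit formula for $\chi_{M^d}$ with only root-of-unity bookkeeping. One cosmetic remark: in case (ii) you correctly take $k=(m-1)/d$, which is what the paper's proof uses even though the theorem statement misprints $k=m/d$ there.
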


\begin{proof} Part (i): Since $(1/k,d/m-1)$ is a Farey pair, following \hyp{Corollary}{cor:divisor}, $n < m + k - 1$; consequently,  
\begin{align*}
d = \frac{m}{k} \leq \frac{n}{k} < \frac{m + k - 1}{k} = d + 1 - \frac{1}{k} < d + 1
\end{align*}
and hence $\floor{n/k} = d$. The Ito equations for $(1/k,d/m-1)$ are given by  
\begin{align*}
t^{m-1} \left( t^k - \beta \right)^{\floor{n/k}} = \alpha^{\floor{n/k}} t^{k\floor{n/k}},~\alpha \in [0,1],~\beta:=1-\alpha,
\end{align*}
and the reduced Ito polynomials for this arc are given by 
\begin{align}
q_\alpha (t) = (t^k - \beta)^d - \alpha^d t,~\alpha \in [0,1],~\beta:=1-\alpha. 
\end{align}
Notice that $\deg{(q_\alpha)} = m$, for every $\alpha \in [0,1]$. 

Let $\lambda = \lambda(\alpha) \in K(1/k,d/m-1)$. Consider the reduced Ito polynomial $p_\beta (t) = t^m -  \alpha t - \beta$ and its nonnegative companion matrix $M = M(\beta)$. The Cayley-Hamilton theorem (see, e.g., \cite[p.~109]{hj2013}) ensures that $M^m - \beta I = \alpha M$; hence  
\begin{align*}
q_\alpha(M^d) = (M^{dk} - \beta I)^d - \alpha^d M^d = (M^m - \beta I)^d  - (\alpha M)^d = 0,		
\end{align*}
i.e., $q_\alpha$ is an \emph{annihilating polynomial} for $M^d$.  

Denote by $\psi_M$ the \emph{minimal polynomial} of $M$, i.e., $\psi_M$ is the unique monic polynomial of minimum degree that annihilates $M$ (see, e.g., \cite[p.~192]{hj2013}). Since $M$ is a companion matrix, $\psi_M = \chi_M$ (\cite[Theorem 3.3.14]{hj2013}). Hence, if $J = \inv{S} M S$ is a Jordan canonical form of $M$, then $J$ is \emph{nonderogatory} (\cite[Theorem 3.3.15]{hj2013}), i.e., $J$ contains exactly one \emph{Jordan block} corresponding to every distinct eigenvalue. Since $M^d = S J^d \inv{S}$, it follows that any Jordan canonical form of $J^d$ is nonderogatory -- indeed, if $f(x) = x^d$, then $f'(x) = d x^{d-1}$ and $f'(x) = 0$ if and only if $x=0$; since zero is not a repeated root \eqref{lambdafrac} (and hence not associated with a nontrivial Jordan block), the claim follows from \cite[p.~424, Theorem 6.2.25]{hj1994}) -- thus, $M^d$ is nonderogatory and, following \cite[Theorem 3.3.15]{hj2013}, $\psi_{M^d} = \chi_{M^d}$ and $\deg{\left(\psi_{M^d}\right)} = m$. Since $\psi_{M^d}$ is the unique polynomial of minimum degree that annihilates $M$, and since $\deg{(q_\alpha)} = m$, it must be the case that $\chi_{M^d} = \psi_{M^d} = q_\alpha$. Hence, $M^d$ is a realizing-matrix for $\lambda$.

Part (ii): By hypothesis, 
\begin{align*}
d = \frac{m-1}{k} < \frac{m}{k} \leq \frac{n}{k}, 
\end{align*}
hence $d \leq \floor{n/k}$. Since $m > k\floor{n/k}$, it follows that $m - k\floor{n/k} \geq 1$ and $\floor{n/k} \leq (m-1)/k = d$. Hence, $d = \floor{n/k}$.  

The Ito equations for $(d/m,1/k)$ are given by 
\begin{align*}
t^m \left( t^k - \beta \right)^d = \alpha^d t^{m-1},~\alpha \in [0,1],~\beta:=1-\alpha, 
\end{align*}
and the reduced Ito polynomials for this arc are given by
\begin{align*}
q_\alpha (t) = t(t^k - \beta)^d - \alpha^d,~\alpha \in [0,1],~\beta:=1-\alpha.
\end{align*}
Notice that $\deg{(q_\alpha)} = m$, for every $\alpha \in [0,1]$. 

Let $\lambda = \lambda(\alpha) \in K(d/m,1/k)$. Consider the reduced Ito polynomial $f_\alpha (t) = t^m -  \beta t - \alpha$ and its nonnegative companion matrix $M = M(\alpha)$. The Cayley-Hamilton theorem ensures that $M(M^{m-1} - \beta I) = M^m - \beta M = \alpha I$; hence  
\begin{align*}
q_\alpha(M^d) = M^d (M^{m-1} - \beta I)^d - \alpha^d I = (M^m - \beta M)^d  - (\alpha I)^d = 0,		
\end{align*}
i.e., $q_\alpha$ is an \emph{annihilating polynomial} for $M^d$. 

Using exactly the same argument as in part (i), it can be shown that $\chi_{M^d} = \psi_{M^d} = q_\alpha$. Hence, $M^d$ is a realizing-matrix for $\lambda$. 
\end{proof}

%-----------------------------------------------------------------------------------------------------------------------------------------------------------------------------------------------------------------------------
\section{Additional Questions}
%-----------------------------------------------------------------------------------------------------------------------------------------------------------------------------------------------------------------------------

In this section, we pose several problems and conjectures for further inquiry.

%-----------------------------------------------------------------------------------------------------------------------------------------------------------------------------------------------------------------------------
\subsection{\karp~Arcs}

\hyp{Theorem}{thm:main} establishes the existence of parametric realizing-matrices for the K-arcs. Suppose that $M$ is a realizing-matrix for a given point on a given arc, and let $M_k$ be the irreducible component that realizes the arc. Clearly, $M_k^\top$ and $P M_k P^\top$ are also realizing-matrices. With the aforementioned in mind, we offer the following.  

%-----------------------------------------------------------------------------------------------------------------------------------------------------------------------------------------------------------------------------
\begin{prob}
To what extent are the realizing-matrices unique? 
\end{prob}

\hyp{Corollary}{cor:diffarcs} and \hyp{Theorem}{thm:arcpowers} show that many, but not all arcs are differentiable. Given the empirical evidence, we pose the following.

%-----------------------------------------------------------------------------------------------------------------------------------------------------------------------------------------------------------------------------
\begin{conj}
All K-arcs of order $n$ are differentiable for every $n$.
\end{conj}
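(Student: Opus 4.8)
The plan is to handle each of the four arc types from the proof of \hyp{Theorem}{thm:main} in turn. A Type~0 arc is a line segment, $\lambda(\alpha)=\beta+\alpha\omega$ with $\omega^n=1$, hence trivially differentiable, so all the content lies in Types~I, II, and~III. For such an arc, let $g_\alpha$ be the corresponding reduced Ito polynomial and set $F(t,\alpha):=g_\alpha(t)$; the arc is the graph of the branch $\lambda=\lambda(\alpha)$ of $F(t,\alpha)=0$ joining the endpoint root of unity at $\alpha=0$ to the one at $\alpha=1$. By the implicit function theorem, $\lambda(\cdot)$ is real-analytic, and in particular $C^1$, on any open subinterval of $(0,1)$ on which $\partial_t F(\lambda(\alpha),\alpha)=g'_\alpha(\lambda(\alpha))\neq0$, i.e. on which $\lambda(\alpha)$ is a simple root of $g_\alpha$. (For Type~I this is exactly the setting of the paragraph preceding \hyp{Proposition}{distincteigs}, since $g_\alpha=\alpha(t^s-1)+(1-\alpha)t^{s-q}(t^q-1)$ is affine in $\alpha$; for Types~II and~III, $g_\alpha$ depends on $\alpha$ polynomially of degree $\floor{n/q}$, but the implicit-function argument is identical.) Thus differentiability of the arc reduces to understanding the parameters $\alpha$ at which $\lambda(\alpha)$ is a \emph{multiple} root of $g_\alpha$, together with the endpoints $\alpha\in\{0,1\}$.

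The next step is to carry out, for each type, the multiple-root analysis performed for $t^n-\beta t-\alpha$ in \hyp{Proposition}{distincteigs}. Imposing $g_\alpha(\lambda)=g'_\alpha(\lambda)=0$ and eliminating, one is led to identities forcing $\lambda^q$ to be real and positive (for Type~I, $\lambda^q=(s-q)\beta/s$, with analogous identities in Types~II and~III), so $\lambda=\rho\zeta$ with $\rho=|\lambda|$ and $\zeta^q=1$; substituting back and combining $\gcd(q,s)=1$ with a parity argument in the spirit of \hyp{Remark}{rem:negmult} should force $\zeta=\pm1$, so that $\lambda$ is real — indeed negative, of multiplicity exactly two. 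Consequently, for $\alpha\in(0,1)$ every multiple root of $g_\alpha$ is real, and the arc is $C^1$ at every interior parameter at which it is nonreal. What remains is the finite set of parameters at which the arc meets the real axis, and here one must rule out a corner: one needs to show that near such a crossing the arc is a straight segment of the real axis, or else that the two branches of $g_\alpha(t)=0$ through the real multiple root leave it with opposite tangent directions, so that the arc is still $C^1$ there. \textbf{This is the step I expect to be the main obstacle.} Everything beyond \hyp{Corollary}{cor:diffarcs} is concentrated here: a naive Newton-polygon count of the branches through a real multiple root of a Type~I polynomial with $q$ even does not by itself exclude a $90^\circ$ corner, so what is needed is either a genuine local analysis of the solution set at its real multiple points or a proof that, along an actual K-arc, such collisions occur only in the benign real-segment configuration. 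The resultant computations for Types~II and~III are also heavier than the Type~I case treated in \hyp{Proposition}{distincteigs}.

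Finally one treats the endpoints $\alpha\in\{0,1\}$. At $\alpha=1$ the reduced Ito polynomial is $t^s-1$ for every type, with simple roots, so the implicit function theorem gives $C^1$-smoothness up to $\alpha=1$. At $\alpha=0$ the endpoint $e^{2\pi\ii p/q}$ is a simple root of $g_0$ in Types~0 and~I but a root of multiplicity $k:=\floor{n/q}$ in Types~II and~III, and there I would run a Newton-polygon / Puiseux analysis of the $k$ branches emanating from it. In the small cases one checks (e.g. for $n=4$, $q=2$: the two branches leave $-1$ in the directions $\tfrac{\alpha}{2}(1\pm\ii)+O(\alpha^2)$) that the lowest edge of the Newton polygon has slope $1$, so each branch is an ordinary power series in $\alpha$ with nonzero linear coefficient and the arc extends $C^1$ to $\alpha=0$; proving that this slope equals $1$ for all admissible triples $(q,s,n)$ — equivalently, that the relevant subresultant is nonzero — is the last technical point. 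Combining the three parts would give differentiability of the arc on all of $[0,1]$.
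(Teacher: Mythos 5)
The statement you are proving is not proved in the paper at all: it is posed there as an open conjecture, supported only by the partial results of \hyp{Proposition}{distincteigs}, \hyp{Corollary}{cor:diffarcs} (which covers only the arcs $K_n(1/m,1/(m-1))$ coming from the trinomials $t^n-\beta t-\alpha$) and the empirical evidence from \hyp{Theorem}{thm:arcpowers}. So there is no paper proof to measure you against, and the question is whether your sketch closes the conjecture on its own. It does not. The reduction you set up in the first paragraph (implicit function theorem away from multiple roots of the reduced Ito polynomial, so that only multiple-root parameters and the endpoints need attention) is sound, and your Type~I multiple-root analysis can indeed be pushed through: from $g_\alpha(\lambda)=g'_\alpha(\lambda)=0$ one gets $\lambda^q$ real positive and $\lambda^{s-q}$ real negative, and $\gcd(q,s-q)=1$ then forces $\lambda$ real, generalizing \hyp{Proposition}{distincteigs}. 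But the three places where the real work lies are exactly the places you leave open: (1) for Types~II and~III the analogous elimination is only asserted (``should force $\zeta=\pm1$''), and it is not a routine repetition of the Type~I computation because $g_\alpha$ is no longer affine in $\alpha$ and the derivative condition couples $(t^q-\beta)^{k-1}$ with a second monomial of unrelated degree; (2) even granting that all interior multiple roots are real, you explicitly concede that you cannot yet exclude a corner where the arc crosses the real axis at such a root — this is precisely the obstruction that separates \hyp{Corollary}{cor:diffarcs} from the full conjecture, and ``either a genuine local analysis \dots or a proof that such collisions occur only in the benign configuration'' is a statement of the problem, not an argument; (3) at $\alpha=0$ the endpoint $e^{2\pi\ii p/q}$ is a root of multiplicity $\floor{n/q}$ for Types~II and~III, and your claim that the relevant Newton-polygon edge has slope $1$ for all admissible $(q,s,n)$ is checked only in one small example.

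A further point to be careful about if you pursue this: you should fix which notion of ``differentiable'' you are proving. Smoothness of $\alpha\mapsto\lambda(\alpha)$ on $(0,1)$ and geometric smoothness of the arc as a curve are not the same — a point with $\lambda'(\alpha)=0$ can be a geometric corner even when the parameterization is $C^1$, and conversely a branch point in $\alpha$ need not be a corner of the trace. Your step (2) is really a statement about the tangent directions of the branches of $g_\alpha(t)=0$ through a real multiple root, i.e.\ about the curve, so the whole argument should be phrased at that level. As it stands, your proposal is a reasonable research program toward the conjecture — and its first paragraph plus the Type~I elimination is a genuine extension of \hyp{Proposition}{distincteigs} to general Type~I arcs — but the conjecture remains open after it.
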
 

For $S\subseteq\bb{C}$, let $S^d := \{ \lambda^d : \lambda \in S \}$. \hyp{Theorem}{thm:arcpowers} demonstrates that $\sig{M}^d = \sig{M^d}$. Although the evidence is ample, a demonstration that the powered K-arc $K_n^d(1/m,1/m-1)$ corresponds to $K_n(1/k,d/m-1)$ ($d$ divides $m$) or $K_n(d/m,1/k)$ ($d$ divides $m-1$ and $m>k\floor{n/k}$) has proven elusive. Thus, we offer the following.

%-----------------------------------------------------------------------------------------------------------------------------------------------------------------------------------------------------------------------------
\begin{conj}
Let $d$, $m$, and $n$ be positive integers such that $1< d < m \leq n$. Suppose that $(1/m,1/m-1)$ and $(d/m,d/m-1)$ are Farey pairs of order $n$. 
\begin{enumerate}[label=(\roman*)]
\item If d divides m, then $K_n^d (1/m,1/m-1) = K_n(1/k,d/m-1)$, where $k = m/d$.  

\item If d divides $m-1$ and $m > k\floor{n/k}$, then $K_n^d (1/m,1/m-1) = K_n(d/m,1/k)$, where $k = m/d$. 
\end{enumerate}
\end{conj}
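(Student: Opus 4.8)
The plan is to turn the set‑equality into a statement about analytic continuation of roots of the reduced Ito polynomials, using \hyp{Theorem}{thm:arcpowers} for the algebra and \hyp{Proposition}{distincteigs} together with \hyp{Corollary}{cor:diffarcs} to control the one parameter at which the relevant polynomial can have a repeated root. I describe part (i); part (ii) is completely parallel. Set $k := m/d$ (so $m = kd$) and $\beta := 1-\alpha$. By \hyp{Theorem}{thm:karpito} the arc $A_1 := K_n(1/m,1/m-1)$ is the branch traced by $g_\alpha(t) = t^m - \beta t - \alpha$ as $\alpha$ runs from $0$ (endpoint $e^{2\pi\ii/(m-1)}$) to $1$ (endpoint $e^{2\pi\ii/m}$), and $A_2 := K_n(1/k,d/m-1)$ is the branch traced by $q_\alpha(t) = (t^k-\beta)^d - \alpha^d t$ from $e^{2\pi\ii/k}$ at $\alpha=0$ to $e^{2\pi\ii d/(m-1)}$ at $\alpha=1$ (note $d/(m-1)$ is in lowest terms since $d\mid m$). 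Writing $\mu(\alpha)$ for the point of $A_1$ at parameter $\alpha$, the computation in the proof of \hyp{Theorem}{thm:arcpowers} produces a companion matrix $M(\beta)$ of $t^m-\beta t-\alpha$ with $\chi_{M(\beta)^d} = (t^k-\alpha)^d - \beta^d t$; since $\mu(\alpha)\in\sig{M(\beta)}$, the point $\mu(\alpha)^d$ is always a root of this polynomial. Thus $\gamma\mapsto P(\gamma):=\mu(1-\gamma)^d$ is a continuous curve with $P(0)=e^{2\pi\ii/k}$, $P(1)=e^{2\pi\ii d/(m-1)}$, and $q_\gamma(P(\gamma))=0$ for all $\gamma\in[0,1]$ --- the very same data as the parametrisation $\nu$ of $A_2$ given by \hyp{Theorem}{thm:karpito}. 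Part (i) of the conjecture is therefore equivalent to $P\equiv\nu$ on $[0,1]$.

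Next I would run an analytic‑continuation argument anchored at $\gamma=1$. There $q_1(t)=t^m-t=t(t^{m-1}-1)$ has $m$ distinct roots, so for $\gamma$ near $1$ the root of $q_\gamma$ near $e^{2\pi\ii d/(m-1)}$ is a well‑defined analytic function of $\gamma$, which both continuous curves $P$ and $\nu$ must equal; hence $S:=\{\gamma:P(\gamma)=\nu(\gamma)\}$ contains an interval $(1-\epsilon,1]$. The set $S$ is closed, and at any $\gamma_0\in S\cap(0,1)$ at which $\nu(\gamma_0)=P(\gamma_0)$ is a \emph{simple} root of $q_{\gamma_0}$ the same local‑uniqueness argument shows $S$ is relatively open at $\gamma_0$. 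So it remains to verify that $P$ (equivalently $A_2$) meets $q_\gamma$ at a simple root for every $\gamma\in(0,1)$.

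For this I would analyse multiple roots of $q_\gamma=\chi_{M(1-\gamma)^d}$, where $M(1-\gamma)$ is the companion matrix of $g_\gamma(t)=t^m-\gamma t-(1-\gamma)$. A root $w$ of $q_\gamma$ is multiple iff either (a) $w=\mu^d$ for a multiple root $\mu$ of $g_\gamma$, or (b) $w=\mu_i^d=\mu_j^d$ for two distinct roots $\mu_i\neq\mu_j$ of $g_\gamma$. Case (b) cannot occur for $\gamma\in(0,1)$: if $\mu_j=\zeta\mu_i$ with $\zeta^d=1$, $\zeta\neq1$, then $\zeta^m=1$ because $d\mid m$, and subtracting $g_\gamma(\mu_i)=0$ from $g_\gamma(\zeta\mu_i)=0$ gives $\gamma\mu_i(\zeta-1)=0$, forcing $\mu_i=0$ and hence $\gamma=1$. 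For case (a), $g_\gamma$ has exactly the form of the polynomial in \hyp{Proposition}{distincteigs} (with $m$ in the role of $n$), so by that proposition together with \hyp{Remark}{rem:negmult} and \hyp{Remark}{polypi}, $g_\gamma$ has a multiple root for at most one $\gamma_0\in(0,1)$ --- and only when $m$ is odd --- at which $g_{\gamma_0}$ has a single real negative double root $\mu_0$ and otherwise simple roots; hence $q_{\gamma_0}$ has the single double root $\mu_0^d$ and all other roots simple. But the point $\mu(1-\gamma_0)$ of $A_1$ is \emph{not} $\mu_0$: the arc $A_1=K_n(1/m,1/m-1)$ is differentiable by \hyp{Corollary}{cor:diffarcs} (we have $m\geq\floor{n/2}+1$ since $(1/m,1/m-1)$ is a Farey pair of order $n$), and its interior has argument in the open interval $(2\pi/m,\,2\pi/(m-1))\subseteq(0,\pi)$ by the monotonicity of the argument along \karp~arcs, hence is non‑real, while $\mu_0$ is real; and $\mu(1-\gamma_0)^d\neq\mu_0^d$ as well, since otherwise $\mu(1-\gamma_0)$ and $\mu_0$ would be distinct roots of $g_{\gamma_0}$ with equal $d$‑th powers, contradicting case (b). Therefore $P(\gamma_0)=\mu(1-\gamma_0)^d$ is a simple root of $q_{\gamma_0}$, so $S$ is relatively open at $\gamma_0$ too. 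Being clopen and nonempty in the connected set $(0,1)$, $S\cap(0,1)=(0,1)$, and by closedness $S=[0,1]$; hence $K_n^d(A_1)=A_2$. Part (ii) goes through identically with $M(\alpha)$ the companion of $t^m-\beta t-\alpha$, $\chi_{M(\alpha)^d}=t(t^k-\beta)^d-\alpha^d$ and $k=(m-1)/d$, anchoring at $\alpha=1$ where $q_1=t^m-1$ has simple roots, the case (b) computation now using $\zeta^m=\zeta$ (from $d\mid m-1$) so that subtraction yields $\alpha(\zeta-1)=0$.

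The main obstacle is not the root counting above --- the tools already in the paper dispatch that --- but the way \hyp{Theorem}{thm:karpito} is used in the first two steps: one needs the \karp/Ito description in the sharper form that each K‑arc \emph{is} a single continuously parametrised branch of the zero‑locus of its Ito equation, with argument monotone along it, and that this branch is uniquely pinned down near a simple root of an endpoint polynomial. Ito's statement as quoted (``the boundary consists of $\ldots$ arcs connecting them in circular order, each given by the parametric equation'') is morally exactly this, but converting it into the clean assertions ``$\nu$ is the unique continuous selection of a root of $q_\gamma$ through the given simple endpoint root'' and ``$A_2$ cannot reach the double root $\mu_0^d$ at $\gamma_0$ by a route other than the one traced by $P$'' requires revisiting the proof of the \karp~Theorem itself, which is presumably why a complete argument has so far remained elusive.
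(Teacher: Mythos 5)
You are proving a statement that the paper itself leaves as a conjecture --- the authors explicitly say that a demonstration ``has proven elusive'' --- so there is no paper proof to compare against; the only question is whether your argument closes the gap, and it does not quite. The genuinely solid part of your proposal is the multiplicity analysis: the identity $\chi_{M(\cdot)^d}=q_{\cdot}$ from the proof of \hyp{Theorem}{thm:arcpowers}, the dichotomy (a)/(b) for multiple roots of $q_\gamma$, the elimination of case (b) on $(0,1)$ via the root-of-unity computation (using $d\mid m$, resp.\ $d\mid m-1$), and the control of case (a) through \hyp{Proposition}{distincteigs}, \hyp{Remark}{rem:negmult} and \hyp{Remark}{polypi} (a single parameter, a single real negative double root of multiplicity exactly two) are all correct, and the observation that the arc point at that parameter can neither equal the double root nor share its $d$-th power (by the case (b) argument) is a nice way to push the continuation through the one potentially bad parameter.

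The proof is, however, conditional on exactly the assertions you flag in your last paragraph, and those are not peripheral technicalities but the substance of why the conjecture is open. Your clopen argument requires: (1) that $K_n(1/m,1/(m-1))$ is \emph{precisely} the image of a continuous selection $\alpha\mapsto\mu(\alpha)$ of roots of $t^m-\beta t-\alpha$ with the endpoints attained at $\alpha\in\{0,1\}$ (otherwise $K_n^d(1/m,1/(m-1))$ need not equal the image of your curve $P$); (2) the same for $K_n(1/k,d/(m-1))$ (resp.\ $K_n(d/m,1/k)$) with respect to $q_\gamma$, including that near the anchoring endpoint the arc follows the unique branch through the simple endpoint root --- without this, neither the base step $S\supseteq(1-\epsilon,1]$ nor the relative openness of $S$ is justified; and (3) the non-reality of interior points of $K_n(1/m,1/(m-1))$, which you attribute to an unproved ``monotonicity of the argument along \karp~arcs'' (the weaker fact you actually need can be extracted from star-convexity of $\Theta_n$ and the circular-order statement in \hyp{Theorem}{thm:karpito}, but you do not carry out that derivation). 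Items (1)--(2) are the sharper ``each K-arc is a single continuously parametrized root branch of its Ito equation'' form of the \karp--Ito theorem; \hyp{Theorem}{thm:main} speaks informally of $\lambda(\alpha)$, but nothing in the paper, and nothing in your argument, shows the arc \emph{equals} the traced branch rather than merely lying in the zero locus of the Ito equations. Until that is established (by revisiting the proofs of \karp\ or Ito, or by an independent argument such as showing directly that your curve $P(\gamma)$ remains on $\partial\Theta_n$), what you have is a clean reduction of the conjecture to that parametrization statement, not a proof of it.
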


Let $K$ be a K-arc and let $d_K : [0,1] \longrightarrow \mathbb{R}_0^+$ be the function defined by $\alpha \longmapsto |\lambda|$, where $\lambda = \lambda(\alpha)$ is the point on $K$ corresponding to $\alpha \in [0,1]$. From \hyp{Figure}{fig:karpregions}, we pose the following. 

\begin{conj}
If $K$ is any K-arc, then the function $d_K$ is strictly convex. 
\end{conj}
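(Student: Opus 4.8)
The plan is to follow the four-way case split of the proof of Theorem~\ref{thm:main} and compute $d_K''$ in each case. For a \emph{Type~0} arc the tracing root is the affine function $\lambda(\alpha) = (1-\alpha)\cdot 1 + \alpha\, e^{2\pi\ii/n}$, so $\lambda$ runs along the chord of the unit circle joining $1$ and $e^{2\pi\ii/n}$, and since $n\geq 3$ this chord misses the origin. Hence $Q(\alpha):=|\lambda(\alpha)|^2$ is a positive-definite quadratic in $\alpha$; completing the square writes $d_K=\sqrt{Q}$ in the form $\sqrt{a(\alpha-\alpha_0)^2+h}$ with $a>0$ and $h>0$, which is strictly convex. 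This disposes of every Type~0 arc.

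For the other three types the arc is carried by the relevant root $\lambda=\lambda(\alpha)$ of the reduced Ito polynomial $f_\alpha$, and I would first collapse the three cases into one. Substituting $\lambda=\mu^{\ell}$ with $\ell:=\floor{n/q}$ (and absorbing a fixed root of unity into $\mu$, which is harmless since $|\mu|$ is rotation-invariant) turns $f_\alpha(\lambda)=0$ into a \emph{trinomial} equation $\mu^{N}-\gamma\,\mu^{N-j}-\delta=0$ with $\{|\gamma|,|\delta|\}=\{\alpha,\beta\}$, while $d_K=|\mu|^{\ell}$. Since $x\mapsto x^{\ell}$ is convex and increasing on $[0,\infty)$ and $|\mu|>0$ on $(0,1)$, it suffices to prove that $\alpha\mapsto|\mu(\alpha)|$ is strictly convex. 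Using the implicit-differentiation identity established just before Proposition~\ref{distincteigs} (write $f_\alpha=\alpha F+\beta G$, so $\mu'=(G(\mu)-F(\mu))/f_\alpha'(\mu)$) together with the defining relation $\mu^{N}=\gamma\mu^{N-j}+\delta$ to clear high powers, one obtains closed forms for $\mu'$ and $\mu''$. Writing $Q:=|\mu|^2$, strict convexity of $|\mu|$ is equivalent to $2QQ''-(Q')^2>0$, and expanding gives $2QQ''-(Q')^2 = 4\bigl(|\mu|^2|\mu'|^2-\Re(\overline{\mu}\mu')^2\bigr)+4|\mu|^2\Re(\overline{\mu}\mu'')$. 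The first summand is $\geq 0$ by Cauchy--Schwarz, so it is enough to show $\Re(\overline{\mu}\mu'')>0$ for every $\alpha\in(0,1)$, i.e.\ that the arc bends away from the origin in the $\alpha$-parameter.

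The step I expect to be the real obstacle is exactly this inequality for Types I, II and III: the positive-quadratic structure that made Type~0 automatic is gone, and $\Re(\overline{\mu}\mu'')>0$ must be extracted from the trinomial relation itself, presumably using $|\mu|<1$ for $\alpha\in(0,1)$ (the strict Perron dominance of \eqref{dominate}) to control the signs; I do not yet see an invariant that makes it manifest. There is also a genuine caveat, which I would settle first so that the calculus has a clean domain to run on: the whole argument presupposes $\mu(\alpha)$ is a \emph{simple} root of $f_\alpha$ throughout $(0,1)$, i.e.\ $d_K\in C^2(0,1)$. By the resultant computation behind Proposition~\ref{distincteigs} an interior multiple root can occur; for $K_3(1/3,1/2)$ the tracing root $\lambda(\alpha)$, a root of $t^3-\beta t-\alpha$, passes through the double root $-\tfrac12$ at $\alpha=\tfrac14$, where $\lambda$ acquires a square-root branch point, $d_K$ fails to be $C^1$, and in fact $d_K$ is locally concave there (one has $d_K(0)=1$, $d_K(\tfrac18)=\tfrac{2+\sqrt2}{4}>\tfrac34$, $d_K(\tfrac14)=\tfrac12$), so the conjecture as stated actually fails here. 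Such a real multiple root can lie on the arc only when an endpoint of the arc is the real root of unity $-1$ (the arcs through $+1$ being Type~0), and at those arcs $q=2$ forces $-1$ to have multiplicity $\floor{n/q}=\floor{n/2}$ in $f_0$, which is $\geq 2$ once $n\geq 4$; so I would first prove that for $n\geq 4$ the tracing root stays simple on $(0,1)$ (extending Proposition~\ref{distincteigs} and Corollary~\ref{cor:diffarcs} beyond the $\floor{n/q}=1$ family treated there), restrict the conjecture to those arcs, and only then attack $\Re(\overline{\mu}\mu'')>0$.
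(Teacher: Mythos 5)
This statement is left as an open conjecture in the paper --- there is no proof of it there to compare against --- and your submission is, by your own account, a program rather than a proof: only the Type~0 case is actually closed. The pivotal step for Types I--III, namely $\Re\left(\overline{\mu}\,\mu''\right)>0$, is only a sufficient condition (the Cauchy--Schwarz step discards the other term) and it is left unproven; worse, it cannot be true in the stated generality, because the conjecture itself fails on a smooth, simple-root piece of an arc. Concretely, for $K_3(1/2,1/3)$ the reduced Ito polynomial factors as $t^3-\beta t-\alpha=(t-1)(t^2+t+\alpha)$, so for $\alpha\in[1/4,1]$ the tracing root is $\lambda(\alpha)=-\tfrac12+\tfrac{\ii}{2}\sqrt{4\alpha-1}$ and $d_K(\alpha)=\sqrt{\alpha}$, which is strictly \emph{concave} even though every root there (except at $\alpha=1/4$) is simple. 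So the defect you locate at the double root $\lambda=-\tfrac12$, $\alpha=\tfrac14$, is not the real obstruction, and your proposed repair --- restrict to arcs along which the tracing root stays simple --- does not rescue the statement.

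That said, your counterexample arithmetic is correct and valuable: $f_{1/4}=(t-1)\left(t+\tfrac12\right)^2$, and on $[0,1/4]$ one has $d_K(\alpha)=\tfrac12\left(1+\sqrt{1-4\alpha}\right)$, again strictly concave, with your sample values $d_K(0)=1$, $d_K(1/8)=\tfrac{2+\sqrt2}{4}$, $d_K(1/4)=\tfrac12$ confirming failure of midpoint convexity. The right conclusion is that the conjecture as literally stated (with $d_K$ in the Ito parameter $\alpha$) is false at $n=3$ and must be reformulated --- e.g.\ restricted to $n\geq 4$ (where $q=2$ forces $\floor{n/q}\geq 2$ and this particular arc does not recur), or recast with a different parametrization of the arc --- and that even after such a reformulation your plan still has no argument for the key inequality, which the $\sqrt{\alpha}$ example shows cannot follow formally from simplicity of the roots or from $|\mu|<1$. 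A smaller technical caveat: in the Type II/III reduction, the substitution $\lambda=\mu^{\floor{n/q}}$ yields a trinomial only after extracting an $\floor{n/q}$-th root, which introduces a root-of-unity factor that cannot in general be normalized away so that the coefficients become exactly $\alpha$ and $\beta$; only their moduli are preserved, which suffices for studying $|\mu|$ but should be stated as such.
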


%-----------------------------------------------------------------------------------------------------------------------------------------------------------------------------------------------------------------------------
\subsection{The Levick-Pereira-Kribs Conjecture}
 
For a natural number $n$, denote by $\Pi_n$ the convex-hull of the $n$\textsuperscript{th} roots-of-unity, i.e., 
\[ \Pi_n = \left\{ \sum_{k=0}^{n-1} \alpha_k \exp{(2\pi\ii k/n)} : \alpha_k \geq 0,~\sum_{k=0}^{n-1} \alpha_k =1 \right\}. \]
Denote by $\Omega_n$ the subset of the complex-plane containing all single eigenvalues of all $n$-by-$n$ doubly stochastic matrices. Perfect and Mirsky \cite{pm1965} conjectured that $\Omega_n = \bigcup_{k=1}^n \Pi_k$ and proved their conjecture when $1 \leq n \leq 3$. Levick et al.~\cite{lpk2015} proved Perfect-Mirsky when $n=4$ but a counterexample when $n=5$ was given by Mashreghi and Rivard \cite{mr2007}. Levick et al.~conjectured that $\Omega_n = \Theta_{n-1} \cup \Pi_n$ (\cite[Conjecture 1]{lpk2015}). 

In \cite{j1981}, necessary and sufficient conditions were found for a stochastic matrix to be similar to a doubly stochastic matrix. Thus, it is possible to investigate the Levick-Pereira-Kribs Conjecture via the realizing matrices given in \hyp{Theorem}{thm:main} vis-\`{a}-vis the results in \cite{j1981}. In particular, if $M$ is a realizing matrix for $\lambda$ on the boundary of $\Theta_n$ excluding the unit-circle (this case is clear), and $M \oplus 1$ is similar to a doubly stochastic matrix $D$, then $\Theta_{n-1} \cup \Pi_n \subseteq \Omega_n$.

%-----------------------------------------------------------------------------------------------------------------------------------------------------------------------------------------------------------------------------
\section{Acknowledgment}

We would like to thank University of Washington Bothell undergraduate student Amber R.~Thrall for proving that the polynomial $\pi$ is \hyp{Remark}{polypi} has only one root in $(0,1)$.

%-----------------------------------------------------------------------------------------------------------------------------------------------------------------------------------------------------------------------------
% Bib
%-----------------------------------------------------------------------------------------------------------------------------------------------------------------------------------------------------------------------------

\bibliographystyle{abbrv}
\bibliography{master}

\end{document}